\let\origsection=\section 
\def\section{\@ifstar{\origsection*}{\mysection}}
\def\mysection{\@startsection{section}{1}\z@{.7\linespacing\@plus\linespacing}{.5\linespacing}{\normalfont\scshape\centering\S}}
\renewcommand{\PrintDOI}[1]{\doi{#1}}
\newcommand\rmlabel{\upshape({\itshape \roman*\,\/})}
\let\polishlcross=\l
\def\l{\ifmmode\ell\else\polishlcross\fi}
\newcommand\qand{\quad\text{and}\quad}
\newcommand\qqand{\qquad\text{and}\qquad}
\let\emptyset=\varnothing
\let\setminus=\smallsetminus
\let\backslash=\smallsetminus
\def\moverlay{\mathpalette\mov@rlay}
\def\mov@rlay#1#2{\leavevmode\vtop{
   \baselineskip\z@skip \lineskiplimit-\maxdimen
   \ialign{\hfil$\m@th#1##$\hfil\cr#2\crcr}}}
\newcommand{\charfusion}[3][\mathord]{
    #1{\ifx#1\mathop\vphantom{#2}\fi
        \mathpalette\mov@rlay{#2\cr#3}
      }
    \ifx#1\mathop\expandafter\displaylimits\fi}
\newcommand{\dcup}{\charfusion[\mathbin]{\cup}{\cdot}}
\newtheorem{theorem}             {Theorem}
\newtheorem{lemma}     [theorem] {Lemma}
\newtheorem{claim}[theorem] {Claim}
\newtheorem{definition}[theorem]{Definition}
\newtheorem{fact}[theorem]{Fact}
\newtheorem*{observation}{Observation}
\newcommand\cca{{\mathcal{A}}}
\newcommand\ccc{{\mathcal{C}}}
\newcommand\ccd{{\mathcal{D}}}
\newcommand\ccf{{\mathcal{F}}}
\newcommand\cch{{\mathcal{H}}}
\newcommand\cck{{\mathcal{K}}}
\newcommand\ccn{{\mathcal{N}}}
\newcommand\ccp{{\mathcal{P}}}
\newcommand\ccr{{\mathcal{R}}}
\newcommand\cct{{\mathcal{T}}}
\newcommand\ccq{{\mathcal{Q}}}
\newcommand\ccv{{\mathcal{V}}}
\newcommand\ccw{{\mathcal{W}}}
\newcommand\bbn{{\mathds{N}}}
\let\phi=\varphi
\newcommand{\Xibad}{\Xi_{\textsc{bad}}}
\newcommand{\homit}{{\textrm{hom}}}
\begin{document}

\title[Loose Hamiltonian cycles forced by $(k-2)$-degree]{Loose Hamiltonian cycles forced by large $(k-2)$-degree \\ -- approximate version --}

\author[J.~de~O.~Bastos]{Josefran de Oliveira Bastos}
\author[G.~O.~Mota]{Guilherme Oliveira Mota}
\address{Instituto de Matem\'atica e Estat\'{\i}stica, Universidade de
   S\~ao Paulo, S\~ao Paulo, Brazil}
\email{\{josefran|mota\}@ime.usp.br}

\author[M.~Schacht]{Mathias Schacht}
\author[J.~Schnitzer]{Jakob Schnitzer}
\author[F.~Schulenburg]{Fabian Schulenburg}
\address{Fachbereich Mathematik, Universit\"at Hamburg, Hamburg, Germany}
\email{schacht@math.uni-hamburg.de}
\email{\{jakob.schnitzer|fabian.schulenburg\}@uni-hamburg.de}

\thanks{The first author was supported by CAPES\@.
The second author was supported by FAPESP (Proc. 2013/11431-2 and 2013/20733-2) and CNPq (Proc. 477203/2012-4 and {456792/2014-7}).
The third author was supported through the Heisenberg-Programme of the DFG\@.
The cooperation was supported by a joint CAPES/DAAD PROBRAL (Proc. 430/15).}

\begin{abstract}
We prove that for all $k\geq 4$ and $1\leq\ell<k/2$, every $k$-uniform hypergraph~$\cch$ on~$n$ vertices with $\delta_{k-2}(\cch)\geq\left(\frac{4(k-\ell)-1}{4(k-\ell)^2}+o(1)\right)\binom{n}{2}$ contains a Hamiltonian $\ell$-cycle if $k-\ell$ divides $n$. This degree condition is asymptotically best possible.
The case $k=3$ was addressed earlier by Bu\ss\ et al.
\end{abstract}

\keywords{hypergraphs, Hamiltonian cycles, degree conditions}
\subjclass[2010]{05C65 (primary), 05C45 (secondary)}

\maketitle

\section{Introduction}
A $k$-uniform hypergraph $\cch$ is a pair $(V,E)$ with vertex set $V$ and edge set $E$ such that each edge is a subset of $k$ vertices.
Given a $k$-uniform hypergraph $\cch=(V,E)$ and $S \in\binom{V}{s}$, we denote by $\deg(S)$ the number of edges of $\cch$ containing $S$ and we denote by $N(S)$ the $(k-s)$-element sets $T\in \binom{V}{k-s}$ such that $T\dcup S$ is an edge of $E$, so $\deg(S)=|N(S)|$.
We define the \emph{minimum $s$-degree} of $\cch$, denoted by $\delta_s(\cch)$, as the minimum of $\deg(S)$ over all $s$-vertex sets $S\in \binom{V}{s}$.

We say that a $k$-uniform hypergraph is an \emph{$\ell$-cycle} if there exists a cyclic ordering of its vertices such that every edge is composed of $k$ consecutive vertices, two (vertex-wise) consecutive edges share exactly $\ell$ vertices, and every vertex is contained in an edge.
If the ordering is not cyclic, we call it an \emph{$\ell$-path} and we say that the first and last $\ell$ vertices are the ends of the path.

We are interested in the problem of finding minimum degree conditions that ensure the existence of Hamiltonian cycles, i.e.\ cycles containing all vertices of the given hypergraph.
Problems of this type attracted considerable attention in the literature over the last
two decades (see, e.g.,~\cites{RRsurv,MR3526407} and the references therein).
This problem was first studied by Katona and Kierstead in~\cite{KaKi99}.
They posed a conjecture, which was confirmed by the following result of R\"odl, Ruci\'nski, and Szemer\'edi~\cites{RoRuSz06,RoRuSz08}:
For every $k\geq 3$, if $\cch$ is a $k$-uniform $n$-vertex hypergraph with $\delta_{k-1}(\cch)\geq {(1/2+o(1))}n$, then $\cch$ contains a Hamiltonian $(k-1)$-cycle.
Their proof introduces the so-called \emph{Absorbing Method}, which we will use in our proof as well.
In~\cite{KuOs06} K\"uhn and Osthus investigated a similar question for $1$-cycles, proving that $3$-uniform hypergraphs~$\cch$ with $\delta_2(\cch)\geq {(1/4 +o(1))}n$ contain a Hamiltonian $1$-cycle.
This result was generalized to arbitrary~$k$ and $\ell$-cycles with $1\leq \ell <k/2$ by H\`an and Schacht~\cite{HaSc10} (see also~\cite{KeKuMyOs11}).

\begin{theorem}
\label{theorem:Han-Schacht}
For all integers $k\geq 3$ and $1\leq \ell<k/2$ and every $\gamma>0$ there exists an $n_0$ such that every $k$-uniform hypergraph $\cch=(V,E)$ on $|V|=n\geq n_0$ vertices with $n\in(k-\ell)\bbn$ and
\begin{equation*}
\delta_{k-1}(\cch)\geq\left(\frac{1}{2(k-\ell)}+\gamma\right)n
\end{equation*}
contains a Hamiltonian $\ell$-cycle.
\qed
\end{theorem}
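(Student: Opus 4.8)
The plan is to use the \emph{absorption method} of R\"odl, Ruci\'nski and Szemer\'edi~\cites{RoRuSz06,RoRuSz08}, in the form adapted to $\ell$-cycles by K\"uhn and Osthus~\cite{KuOs06}. Write $d=\tfrac{1}{2(k-\ell)}+\gamma$, so that $\delta_{k-1}(\cch)\ge dn$, and recall that a Hamiltonian $\ell$-cycle has exactly $n/(k-\ell)$ edges, each edge ``owning'' the $k-\ell$ vertices of the cycle that do not belong to the preceding edge. The argument rests on three ingredients, which I would establish in turn: a \textbf{Connecting Lemma}, an \textbf{Absorbing Lemma}, and a \textbf{Path-Cover Lemma}.

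For the \textbf{Connecting Lemma}, I would show that there is a constant $t=t(k,\ell)$ with the property that any two vertex-disjoint ordered $\ell$-tuples $\vec x,\vec y$ can be joined by an $\ell$-path that has ends $\vec x$ and $\vec y$, exactly $t$ edges, and all of its internal vertices in any prescribed set $Z$ with $|Z|\ge\gamma n/2$; moreover the number of such connectors should be $\Omega\bigl(|Z|^{t(k-\ell)-\ell}\bigr)$. This is the soft part: one builds the path edge by edge, at each step the current end is an $\ell$-set, one picks all but one of the $k-\ell$ new vertices freely from $Z$ (possible since $\ell\le k-2$, which follows from $\ell<k/2$), and the $(k-1)$-degree hypothesis then provides $\Omega(n)$ admissible choices for the remaining vertex; the last edge or two are used to ``aim at'' $\vec y$, which is why $t$ must be allowed to be a constant rather than~$1$. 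Deleting a random subset then yields a \textbf{Reservoir Lemma}: a set $R\subseteq V$ with $|R|=o(n)$ and $|R|\to\infty$ that still satisfies the degree condition robustly and through whose vertices one may carry out $|R|$ successive connections without ever reusing a vertex.

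The \textbf{Absorbing Lemma} is the heart of the matter. Call a bounded-size $\ell$-path $A$ an \emph{absorber} for a $(k-\ell)$-set $W$ disjoint from $V(A)$ if both $V(A)$ and $V(A)\cup W$ span $\ell$-paths with one and the same pair of ordered ends. The key combinatorial claim is that, under $\delta_{k-1}(\cch)\ge dn$, \emph{every} admissible $W$ has $\Omega\bigl(n^{c}\bigr)$ absorbers of some fixed size $c=c(k,\ell)$; proving this requires designing a sufficiently flexible absorbing gadget together with a counting argument in which the precise value $\tfrac{1}{2(k-\ell)}$ is exactly what keeps the relevant intersecting neighbourhoods non-empty (a naive two-edge gadget is not enough, since two linear neighbourhoods of density close to $\tfrac{1}{2(k-\ell)}$ need not meet). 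Granting the claim, one selects a random family of candidate absorbers, checks by concentration that a positive proportion of the selected absorbers still absorbs each admissible $W$ while only a negligible fraction of the selected absorbers overlap, removes the overlaps, and uses the Reservoir Lemma to string the surviving pairwise-disjoint absorbers into a single \emph{absorbing $\ell$-path} $P_{\mathrm{abs}}$ with $|V(P_{\mathrm{abs}})|=o(n)$ such that, for every set $W'\subseteq V\setminus V(P_{\mathrm{abs}})$ with $|W'|\le\gamma^{5}n$ and $(k-\ell)\mid|W'|$, there is an $\ell$-path on exactly $V(P_{\mathrm{abs}})\cup W'$ with the same ends as $P_{\mathrm{abs}}$ --- one absorbs $W'$ in blocks of size $k-\ell$, using a fresh surviving absorber for each block.

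It remains to prove the \textbf{Path-Cover Lemma}: the set $V\setminus(V(P_{\mathrm{abs}})\cup R)$ can be covered by a constant number of vertex-disjoint $\ell$-paths that together miss at most $\gamma^{5}n/2$ vertices. This also uses the degree bound with its precise constant --- the extremal construction consisting of all $k$-sets that meet a fixed set of $\lceil n/(2(k-\ell))\rceil-1$ vertices, combined with the fact that every vertex of an $\ell$-cycle lies in at most two of its edges when $\ell<k/2$, shows that no smaller constant suffices --- but it is comparatively routine, for instance by repeatedly extracting long $\ell$-paths greedily and noting that the relative minimum $(k-1)$-degree of the leftover stays close to $d$. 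To finish, apply the Connecting Lemma through $R$ to join $P_{\mathrm{abs}}$ and the cover paths cyclically into a single $\ell$-cycle that misses only a set $W'$ formed by the few uncovered vertices together with the unused part of $R$; since $(k-\ell)\mid n$ and each edge of the cycle owns $k-\ell$ of its vertices, $(k-\ell)\mid|W'|$, and $|W'|=o(n)\le\gamma^{5}n$, so the absorbing property of $P_{\mathrm{abs}}$ swallows $W'$ and produces the desired Hamiltonian $\ell$-cycle. The main obstacle is the counting inside the Absorbing Lemma (and, to a lesser degree, the Path-Cover Lemma): both must extract the extra structural room that appears only once the $(k-1)$-degree reaches $\tfrac{n}{2(k-\ell)}$.
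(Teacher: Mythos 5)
The paper does not prove this theorem; it quotes it from H\`an and Schacht~\cite{HaSc10}, whose proof follows exactly the absorption framework you outline (and which the present paper mirrors for its $(k-2)$-degree result in Section~\ref{sec:main}). Your architecture is therefore right, but you have misplaced the real difficulty, and the step where you wave it away contains a genuine gap. You claim the Path-Cover Lemma is ``comparatively routine, for instance by repeatedly extracting long $\ell$-paths greedily and noting that the relative minimum $(k-1)$-degree of the leftover stays close to $d$''. It does not: the hypothesis is an \emph{absolute} codegree of roughly $\frac{n}{2(k-\ell)}$, far below $\frac{n}{2}$, and it is not inherited by induced subhypergraphs on leftover sets of linear size. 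Concretely, let $A$ be a set of exactly $\bigl(\frac{1}{2(k-\ell)}+\gamma\bigr)n$ vertices and let the edge set consist of all $k$-sets meeting $A$. This hypergraph satisfies the hypothesis (a $(k-1)$-set disjoint from $A$ has neighbourhood exactly $A$), yet every greedy extension step whose current end avoids $A$ is forced to spend a vertex of $A$, i.e.\ one $A$-vertex per new edge; after about $|A|$ edges, covering only about $n/2$ vertices, the remaining roughly $n/2$ vertices span no edges at all and the procedure stalls, with no paths available in the leftover. A nearly spanning path cover does exist in this example, but only if the $A$-vertices are placed almost exclusively in the $\ell$-overlaps of consecutive edges, a global constraint that greedy extraction cannot see. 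This is precisely why H\`an and Schacht prove their path-tiling lemma via the weak hypergraph regularity lemma combined with an almost perfect fractional matching/tiling argument in the reduced hypergraph (the analogue in this paper is Lemma~\ref{lemma:tiling} via Lemma~\ref{lemma:hom-tiling}), and it is there that the sharp constant $\frac{1}{2(k-\ell)}$ is actually exploited.

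Relatedly, you misattribute the role of the sharp constant to the Absorbing Lemma: for $\ell<k/2$ absorbers can be designed and counted under \emph{any} fixed linear degree assumption, exactly as in the paper's Lemma~\ref{lemma:absorbing}, which only assumes $\delta_{k-2}(\cch)\ge\eta\binom{n}{2}$ for arbitrary $\eta>0$; the same is true in the codegree setting of \cite{HaSc10}. Your Connecting/Reservoir lemmas and the final assembly-and-absorption step are fine, but without a correct argument for the path-cover step the proposal is incomplete.
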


To see the asymptotic optimality of the minimum degree condition, we consider the following well-known example.
Let $\cch_{k,\ell}=(V,E)$ be a $k$-uniform hypergraph on $n$ vertices such that~$E$ is the set of all edges with at least one vertex from $A \subset V$, where $|A|=\left\lceil \frac{n}{2(k-\ell)}-1 \right\rceil$.
Note that an $\ell$-cycle on $n$ vertices contains $n/(k-\ell)$ edges and for $\ell<k/2$ every vertex is contained in at most two edges of any $\ell$-cycle.
So the hypergraph $\cch_{k,\ell}$ does not contain a Hamiltonian $\ell$-cycle and has $\delta_{k-1}(\cch_{k,\ell}) = \left\lceil \frac{n}{2(k-\ell)}-1 \right\rceil$.
In~\cite{HaZh15} Han and Zhao proved a version of Theorem~\ref{theorem:Han-Schacht} with this sharp degree condition.

K\"uhn, Mycroft, and Osthus~\cite{KuMyOs10} generalized Theorem~\ref{theorem:Han-Schacht} to $1\leq \ell<k-1$, solving the problem of finding minimum $(k-1)$-degree conditions that ensure the existence of Hamiltonian $\ell$-cycles in $k$-uniform hypergraphs.

A natural question is to ask for minimum $d$-degree conditions forcing the existence of Hamiltonian $\ell$-cycles for $d<k-1$.
In this direction Bu\ss, H\`an, and Schacht proved the following asymptotically optimal result in~\cite{BuHaSc13}.

\begin{theorem}
\label{theorem:Buss-Han-Schacht}
For all $\gamma>0$ there exists an $n_0$ such that every $3$-uniform hypergraph $\cch=(V,E)$ on $|V|=n\geq n_0$ vertices with $n\in 2\bbn$ and
\begin{equation*}
\delta_{1}(\cch)\geq\left(\frac{7}{16}+\gamma\right)n
\end{equation*}
contains a Hamiltonian $1$-cycle.
\qed
\end{theorem}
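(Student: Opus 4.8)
We sketch how one would prove Theorem~\ref{theorem:Buss-Han-Schacht} by the Absorbing Method of R\"odl, Ruci\'nski, and Szemer\'edi. Fix $\gamma>0$ and auxiliary constants $1/n\ll\mu'\ll\mu\ll\gamma$. The argument runs through two lemmas and a short assembly.

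The first is an \emph{absorbing path lemma}: there is a loose path $P_{\mathrm{abs}}$ with $|V(P_{\mathrm{abs}})|\le\mu n$ such that for every $W\subseteq V\setminus V(P_{\mathrm{abs}})$ with $|W|\le\mu' n$ and $|W|$ even there is a loose path on $V(P_{\mathrm{abs}})\cup W$ with the same two ends as $P_{\mathrm{abs}}$. One builds $P_{\mathrm{abs}}$ from many vertex-disjoint constant-size gadgets, each a short loose path $a_1 a_2 a_3 a_4 a_5$ that, for one prescribed ordered pair $(x,y)$ of outside vertices, can be rerouted to the loose path $a_1 a_2 x a_3 y a_4 a_5$ with the same ends $a_1,a_5$ — thus swallowing $x$ and $y$. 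Gadgets must swallow two vertices at a time, since a loose path in a $3$-graph has an odd number of vertices; the hypothesis $n\in 2\bbn$ is what makes the parity work out at the end. The existence of enough gadgets is a supersaturation statement: the minimum-degree hypothesis forces $\Omega(n^{5})$ gadgets absorbing any given pair, the count being obtained by summing a vertex-degree estimate over the ``free'' gadget vertices, so that no pair-degree is needed. A routine probabilistic deletion argument then extracts a vertex-disjoint family $\ccf$ of gadgets that absorbs every pair $\Omega(|\ccf|)$ times; linking the members of $\ccf$ into one loose path gives $P_{\mathrm{abs}}$, and an admissible $W$ is absorbed by splitting it into pairs and matching the pairs to distinct gadgets of $\ccf$.

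The second is a \emph{robust almost-spanning path lemma}: whenever $X\subseteq V$ has $|X|\ge(1-\mu)n$, the induced sub-hypergraph $\cch[X]$ contains a single loose path $Q$ omitting at most $\mu' n/2$ vertices of $X$. A naive greedy extension of one loose path is hopeless here — in $\cch_{3,1}$ the complement of $A$ is independent, so greedy stalls with linearly many vertices left — so one instead applies the weak hypergraph regularity lemma, checks that the reduced hypergraph inherits essentially the same minimum-degree condition, finds in it an almost-perfect tiling by short loose paths, realises that tiling by genuine loose paths inside the dense regular triples, and connects the boundedly many resulting paths into a single $Q$ through a reserved vertex set. It is here that the value $7/16$ is sharp: the extremal configuration is $\cch_{3,1}$, whose minimum vertex degree is $\big(\tfrac{7}{16}+o(1)\big)\binom n2$ and which contains no Hamiltonian loose cycle; the surplus $\gamma$ over $7/16$ is precisely what leaves enough room — after the $\mu n$ vertices of $P_{\mathrm{abs}}$ have been set aside — to cover all but $o(n)$ of the rest. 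A near-extremal case, when $\cch$ is $\gamma$-close to $\cch_{3,1}$, is handled separately by a more explicit argument.

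The assembly is then short. Set aside $V(P_{\mathrm{abs}})$, apply the robust almost-spanning path lemma to $\cch[V\setminus V(P_{\mathrm{abs}})]$ to obtain $Q$, connect $P_{\mathrm{abs}}$ and $Q$ into one loose path $P^{*}$ with the same ends as $P_{\mathrm{abs}}$, and join those ends into a loose cycle $\ccc$. The leftover $W:=V\setminus V(\ccc)$ has $|W|\le\mu' n$, and $|W|$ is even because $n\in 2\bbn$ and a loose cycle has an even number of vertices; applying the absorbing path lemma to the copy of $P_{\mathrm{abs}}$ inside $\ccc$ turns $\ccc$ into a Hamiltonian loose cycle, which proves the theorem. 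I expect the main obstacle to be the robust almost-spanning path lemma — and within it the loose-path tiling of the reduced hypergraph at exactly the threshold $7/16$, since that is the step which genuinely detects the extremal example $\cch_{3,1}$ and pins down the constant; the gadget supersaturation, while also delicate, should go through under a somewhat weaker degree assumption.
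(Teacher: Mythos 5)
Your overall architecture (absorbing path, reservoir/connection, regularity plus an almost-perfect loose-path tiling of the reduced hypergraph, then assembly with the parity bookkeeping) is exactly the Bu\ss--H\`an--Schacht route that the paper cites and that the paper itself follows for $k\ge 4$, but your absorbing gadget does not work, and with it the supersaturation claim collapses. Rerouting $a_1a_2a_3a_4a_5$ into $a_1a_2xa_3ya_4a_5$ forces the middle edge of the new loose path to be $\{x,a_3,y\}$, so every gadget absorbing the pair $\{x,y\}$ contains an edge through both $x$ and $y$; hence the number of such gadgets is at most $n^4\deg(\{x,y\})$, and your assertion that a vertex-degree count alone gives $\Omega(n^5)$ gadgets for \emph{every} pair, ``so that no pair-degree is needed'', is false. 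A minimum $1$-degree hypothesis gives no control on codegrees: fix $W_0\subset V$ with $|W_0|=\mu' n$ and let $E$ consist of all triples with at most one vertex in $W_0$; then $\delta_1(\cch)\ge(1-2\mu')\binom{n}{2}$, yet every pair inside $W_0$ has codegree $0$, so no gadget of your type absorbs it, and the absorbing-path property you state (absorb every even $W$ with $|W|\le \mu' n$) fails for any $W\subseteq W_0$. Since the leftover set after the almost-spanning step is adversarial, this is a genuine gap rather than a technicality.

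The repair is to use absorbers in which the two absorbed vertices are inserted into \emph{different} edges of the rerouted path (so the rerouted path has more edges than the original) and whose count can be carried out with vertex-degree information only; this is what Bu\ss, H\`an, and Schacht do, and it is precisely the point where $k=3$ is delicate. Note that the general construction in this paper (Claim~\ref{claim:absorber}) splits the absorbed set into $S_1,S_2$ lying in different edges $f_1,f_2$, but its counting uses degrees of sets of size up to $k-2$, which is why the Absorbing Lemma here is restricted to $k\ge 4$: for $k=3$ the analogous step would again be a pair degree, so the $k=3$ absorber needs a separate, more careful argument (it does, however, go through well below $7/16$, as you guessed). Two smaller remarks: the paper's displayed condition should be read as $(\tfrac{7}{16}+\gamma)\binom n2$, as you correctly used; and since the surplus $\gamma$ already forces $\cch$ to be far from $\cch_{3,1}$, no separate near-extremal case is needed in this approximate version -- the extremal analysis only enters the sharp threshold results of Han and Zhao.
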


Note that the asymptotic optimality again follows from the hypergraph $\cch_{k,\ell}$ considered above for~$k=3$ and $\ell=1$.
The sharp bound for $\delta_{1}(\cch)$ was proved by Han and Zhao in~\cite{HaZh15b}.
We generalize Theorem~\ref{theorem:Buss-Han-Schacht} to $k$-uniform hypergraphs  and give an asymptotically optimal bound on the minimum $(k-2)$-degree for the existence of Hamiltonian $\ell$-cycles for all $1\leq\ell<k/2$.

\begin{theorem}[Main result]
\label{theorem:main}
For all integers $k\geq 4$ and $1\leq \ell<k/2$ and every $\gamma>0$ there exists an $n_0$ such that every $k$-uniform hypergraph $\cch=(V,E)$ on $|V|=n\geq n_0$ vertices with $n\in(k-\ell)\bbn$ and
\begin{equation*}
\delta_{k-2}(\cch)\geq\left(\frac{4(k-\ell)-1}{4(k-\ell)^2}+\gamma\right)\binom{n}{2}
\end{equation*}
contains a Hamiltonian $\ell$-cycle.
\end{theorem}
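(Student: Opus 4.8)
The plan is to follow the absorbing method of R\"odl, Ruci\'nski and Szemer\'edi, as in the $3$-uniform case~\cite{BuHaSc13}, working throughout with the link graphs of $(k-2)$-sets rather than with codegrees of $(k-1)$-sets. Put $m:=k-\ell$; since $k\ge 4$ and $\ell<k/2$ we have $m\ge 3$, and the hypothesis reads $\delta_{k-2}(\cch)\ge\bigl(1-(1-\tfrac{1}{2m})^2+\gamma\bigr)\binom{n}{2}$, where $\tfrac{4m-1}{4m^2}=1-(1-\tfrac{1}{2m})^2$ equals, up to $o(1)$, the relative minimum $(k-2)$-degree $\delta_{k-2}(\cch_{k,\ell})/\binom{n}{2}$ of the extremal example; so the task is to show that $\cch_{k,\ell}$ is essentially the only barrier at this density. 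Given such an $\cch$ we assemble a Hamiltonian $\ell$-cycle in four stages. First, fix a random \emph{reservoir} $R\subseteq V$ of small linear size. Second, build an \emph{absorbing} $\ell$-path $P_{\mathrm{abs}}$ with $|V(P_{\mathrm{abs}})|\le\varepsilon n$ such that $V(P_{\mathrm{abs}})\cup W$ spans an $\ell$-path with the same pair of ends whenever $W\subseteq V\setminus V(P_{\mathrm{abs}})$, $|W|\le\varepsilon^2 n$ and $m\mid |W|$. Third, on the vertices outside $R\cup V(P_{\mathrm{abs}})$ find a bounded number of pairwise disjoint $\ell$-paths covering all but $o(n)$ of them. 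Fourth, connect $P_{\mathrm{abs}}$ and these paths cyclically, taking all connecting vertices from $R$, and absorb into $P_{\mathrm{abs}}$ the set of still-uncovered vertices; this set has $o(n)$ vertices and its size is automatically divisible by $m$, since a loose cycle on $N$ vertices has $N/m$ edges, so the divisibility hypothesis $n\in m\bbn$ is used exactly here.

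The heart of stages one, two and four is a \emph{Connecting Lemma}: there is a constant $t=t(k)$ such that for any two disjoint ordered $\ell$-tuples $\mathbf x,\mathbf y$ and any $U\subseteq V$ disjoint from them with $|U|\ge\beta n$ (for a suitable $\beta=\beta(k,\gamma)>0$), there is an $\ell$-path with ends $\mathbf x$ and $\mathbf y$, at most $t$ internal vertices, all of them in $U$. I would prove it by a reachability argument. To extend an $\ell$-path by one edge beyond an end $(x_1,\dots,x_\ell)$, pick new vertices $z_1,\dots,z_{m-2}$ freely from $U$ and then any edge $\{z_{m-1},z_m\}$ of the link graph of the $(k-2)$-set $\{x_1,\dots,x_\ell,z_1,\dots,z_{m-2}\}$, which has at least $\bigl(\tfrac{4m-1}{4m^2}+\gamma\bigr)\binom{n}{2}$ edges by hypothesis; the new end is $(z_{m-\ell+1},\dots,z_m)$. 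A short computation — this is where the exact constant is spent — shows that after a bounded number of such steps the set of reachable ordered $\ell$-tuples has size $\Omega(n^\ell)$ and, crucially, cannot be confined to the vertex set of a space-barrier configuration such as $V\setminus A$ with $|A|\approx n/(2m)$. Repeating the argument from $\mathbf y$ backwards gives two linear-sized families of end-tuples, which are then joined by a single additional edge, again using the link of a suitable $(k-2)$-set. A standard first-moment and concentration computation upgrades this into a \emph{Reservoir Lemma}: a random $R$ of size $\mu n$ is, with high probability, such that any bounded number of pairs of disjoint ends can be connected one after another with pairwise disjoint internal vertices, all taken from $R$, even after part of $R$ has already been used.

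Stage two, the \emph{Absorbing Lemma}, is then routine: from the $(k-2)$-degree condition one shows that every $m$-subset $X\subseteq V$ has $\Omega(n^{c})$ short ``$X$-absorbers'' — $\ell$-paths on a constant number of vertices, disjoint from $X$, that can be rewired into $\ell$-paths on their own vertices together with $X$ and with the same pair of ends — then takes a random family of $\Theta(\varepsilon^2 n)$ absorbers, which with high probability can absorb any admissible $W$ via a Hall-type argument, and threads the family into one $\ell$-path $P_{\mathrm{abs}}$ using the Connecting Lemma. For the almost-spanning path cover of stage three I would apply the weak hypergraph regularity lemma to the hypergraph induced on $V':=V\setminus(R\cup V(P_{\mathrm{abs}}))$, which still satisfies $\delta_{k-2}\ge\bigl(\tfrac{4m-1}{4m^2}+\tfrac{\gamma}{2}\bigr)\binom{|V'|}{2}$; in the reduced hypergraph a fractional (linear-programming) relaxation, combined with the density hypothesis, produces a bounded collection of dense ``blocks'' each supporting a long $\ell$-path and together covering all but $o(n)$ vertices, which one then realises by a greedy embedding. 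Stage four finally glues the $O(1)$ resulting paths and $P_{\mathrm{abs}}$ into a single $\ell$-cycle through $R$ and absorbs the remainder, completing the Hamiltonian $\ell$-cycle.

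The step I expect to be the main obstacle is the Connecting Lemma together with the extremal analysis that fixes the constant $\tfrac{4m-1}{4m^2}$. With only $(k-2)$-degree control a single $(k-1)$-set may have degree zero, so one cannot grow an $\ell$-path one vertex at a time and must instead reason about the link graphs of $(k-2)$-sets and how they interact as the path is extended; proving that the families of reachable end-tuples never become trapped in a sub-configuration — equivalently, that any hypergraph above the density $\delta_{k-2}(\cch_{k,\ell})/\binom{n}{2}$ has no such space barrier — is the delicate point and is precisely what pins down the degree threshold. The regularity-based path cover is technically involved but conceptually standard, and the reservoir and absorbing lemmas are by now routine once the Connecting Lemma is in hand.
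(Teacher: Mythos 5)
Your four-stage architecture (reservoir, absorbing path, regularity-based almost cover, connect and absorb) matches the paper's, but you have misplaced where the threshold $\tfrac{4(k-\ell)-1}{4(k-\ell)^2}$ is actually needed, and this misplacement hides the real work. In the extremal example $\cch_{k,\ell}$ every $(k-2)$-set has link density roughly $\tfrac{4(k-\ell)-1}{4(k-\ell)^2}$ and any two ends can easily be connected by a short path through $A$; what fails there is \emph{covering}, since every edge meets $A$ while each vertex lies in at most two edges of an $\ell$-cycle. Accordingly, a Connecting Lemma needs only \emph{some} positive link density (in the paper the hypothesis is $|N(K)\cap\binom{R}{2}|\ge\eta\binom{r}{2}$ for an arbitrary $\eta>0$), and no extremal or ``space-barrier'' analysis takes place there. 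The exact constant is spent exclusively in the Path-Tiling step: one must show that the $(k-2)$-degree condition forces an almost perfect fractional tiling by the cherry $\ccc_\ell$ in the reduced hypergraph, and this (the paper's Lemma~\ref{lemma:hom-tiling}, proved via K\"onig's theorem, local improvements of a fractional tiling, and a careful treatment of near-extremal weight functions) is by far the longest and most delicate part of the argument. Your proposal compresses precisely this step into one sentence about ``a fractional (linear-programming) relaxation, combined with the density hypothesis,'' and offers no reason why density $\tfrac{4(k-\ell)-1}{4(k-\ell)^2}+\gamma$ suffices for an almost perfect path cover; since this is exactly where the stated constant is pinned down, the proposal has a genuine gap at its core, while the step you flag as the main obstacle (connecting) in fact requires only crude density.

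There are also two concrete problems with your Connecting Lemma itself. First, as stated it is false: for an arbitrary set $U$ with $|U|\ge\beta n$, the global condition $\delta_{k-2}(\cch)\ge\bigl(\tfrac{4(k-\ell)-1}{4(k-\ell)^2}+\gamma\bigr)\binom{n}{2}$ does not guarantee a single link edge inside $U$, because the link of a $(k-2)$-set may avoid all pairs inside a set of up to roughly $\tfrac{2(k-\ell)-1}{2(k-\ell)}n$ vertices (exactly as in $\cch_{k,\ell}$, where links avoid $V\setminus A$); so your one-edge extension step can die immediately. This is why the paper formulates the Connecting Lemma with a density-into-$R$ hypothesis and proves the Reservoir Lemma by showing that a random $R$ inherits the scaled degree condition; you gesture at a random reservoir, so the fix is available, but the lemma you state (and invoke for arbitrary vertex sets when threading the absorbers) needs this reformulation. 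Second, your final ``join the two reachable families by a single edge using the link of a suitable $(k-2)$-set'' does not go through when $2\ell=k-1$: there the union of the two prescribed ends has $k-1$ vertices and is not controlled by the $(k-2)$-degree, which is exactly the case the paper must handle with a separate, more intricate construction (extendable triples and connecting paths of four edges).
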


The hypergraph $\cch_{k,\ell}$ motivates the following notion of extremality.
Let $k\geq 3$ and $\ell\geq 1$ be integers and let $0<\xi<1$.
A $k$-uniform hypergraph $\cch=(V,E)$ is called \emph{$(\ell,\xi)$-extremal} if there exists a set $B\subset V$ such that $|B|=\big\lfloor \frac{2(k-\ell)-1}{2(k-\ell)}n\big\rfloor$ and $e(B)\leq \xi \binom{n}{k}$, where $e(B)$ stands for the number of edges in the subhypergraph of $\cch$ induced by $B$.
Our main result follows directly from the following theorem.

\begin{theorem}
\label{theorem:hamilton}
For any $0<\xi<1$ and all integers $k\geq 4$ and $1\leq \ell<k/2$, there exists $\gamma>0$ such that the following holds for sufficiently large $n$.
Suppose $\cch$ is a $k$-uniform hypergraph on $n$ vertices with $n\in(k-\ell)\bbn$ such that $\cch$ is not $(\ell,\xi)$-extremal and
\begin{equation*}
\delta_{k-2}(\cch)\geq\left(\frac{4(k-\ell)-1}{4(k-\ell)^2}-\gamma\right)\binom{n}{2}.
\end{equation*}
Then $\cch$ contains a Hamiltonian $\ell$-cycle.
\end{theorem}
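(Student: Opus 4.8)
The plan is to run the \emph{absorbing method} of R\"odl, Ruci\'nski and Szemer\'edi, adapted to loose cycles and to the weaker $(k-2)$-degree hypothesis as in Bu\ss, H\`an and Schacht. Fix $\xi$ and introduce auxiliary constants $0<\gamma\ll\mu\ll\xi$; put $m:=k-\ell$, so that $\ell<k/2$ forces $m>\ell$ and every vertex of an $\ell$-cycle lies in at most two of its edges, and write $d:=\frac{4m-1}{4m^2}$, so the hypothesis reads $\delta_{k-2}(\cch)\geq(d-\gamma)\binom n2$. The skeleton has four parts. Part~(i), the Absorbing Lemma: call an $\ell$-path $A$ on a constant number of vertices, with ordered end-$\ell$-tuples $\vec x,\vec y$, an \emph{absorber} for an $m$-set $Z$ of vertices if $V(A)\cup Z$ also spans an $\ell$-path with ends $\vec x,\vec y$ (absorbing $m$-sets keeps the vertex count divisible by $m=k-\ell$). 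Using $\delta_{k-2}(\cch)\geq(d-\gamma)\binom n2$ together with the fact that $\cch$ is not $(\ell,\xi)$-extremal, one shows that \emph{every} $m$-set has at least $cn^{t}$ absorbers for suitable constants $c>0$ and $t$; non-extremality is what stops the extremal bottleneck --- a near-empty set $B$ of relative size $\frac{2m-1}{2m}$ --- from starving some $Z$ of absorbers. A random choice of $\Theta(\mu n)$ absorbers plus a deletion/concentration step yields a family $\ccf$ in which every $m$-set has $\gg\mu^{2}n/m$ private absorbers, and chaining $\ccf$ into a single $\ell$-path via Part~(ii) below produces an absorbing path $P_{\mathrm A}$ with $|V(P_{\mathrm A})|\leq\mu n$ which can be re-routed into an $\ell$-path on $V(P_{\mathrm A})\cup W$ with the same ends, for every $W\subseteq V\setminus V(P_{\mathrm A})$ with $|W|\leq\mu^{2}n$ and $m$ dividing $|W|$.

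Part~(ii), the Connecting Lemma, is the technical heart. Given the end-$\ell$-tuples of two disjoint $\ell$-paths, one must build a short $\ell$-path bridging them in which consecutive edges meet in exactly $\ell$ vertices; with only $(k-2)$-degree control the natural gadget threads the two end-tuples through several ``free'' interior vertices, and the number of valid completions is bounded from below by repeatedly intersecting neighbourhoods $N(S)$ of $(k-2)$-sets $S$ --- an averaging argument from $\delta_{k-2}(\cch)\geq(d-\gamma)\binom n2$ shows that all but a negligible set of pairs of vertices lie together in plenty of common edges --- ending with $\Omega(n^{b})$ valid bridges for the relevant $b$. Part~(iii), the Reservoir Lemma: a uniformly random subset $R\subseteq V\setminus V(P_{\mathrm A})$ of size $\lfloor\mu^{2}n\rfloor$ inherits, with high probability, a robust version of the connecting property, namely any bounded family of disjoint end-tuples can be connected one after another using only vertices of $R$, even after an arbitrary $(1-\mu)$-fraction of $R$ has already been spent.

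Part~(iv), almost-covering and assembly, is where non-extremality is used again. Put $V':=V\setminus(V(P_{\mathrm A})\cup R)$. Apply a weak (co-degree) regularity lemma to $\cch[V']$; the degree condition passes to the reduced hypergraph, and because $\cch$ is not $(\ell,\xi)$-extremal one gets, via a fractional loose-path tiling / long-path argument in the reduced hypergraph, a bounded number of long $\ell$-paths in $\cch$ covering all of $V'$ except at most $\mu^{2}n/2$ vertices --- the value $d=\frac{4m-1}{4m^2}$ is precisely the density below which such a near-perfect cover can fail, and it fails only through the extremal configuration. Now connect these paths and $P_{\mathrm A}$ successively through $R$ (Part~(iii)), then join the two remaining free ends through $R$, obtaining an $\ell$-cycle $C$ on $V\setminus W$ where $W$ is the uncovered part of $V'$ together with the unused part of $R$; discarding at most $m-1$ more vertices into the leftover if necessary makes $|W|$ divisible by $m$ while keeping $|W|\leq\mu^{2}n$, so the absorbing property of $P_{\mathrm A}$ from Part~(i) rewrites the $P_{\mathrm A}$-segment of $C$ into an $\ell$-path on $V(P_{\mathrm A})\cup W$ with the same ends, turning $C$ into a Hamiltonian $\ell$-cycle.

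I expect the main obstacle to be twofold, and both parts quantitative. First, Part~(iv): proving that a \emph{non-extremal} $k$-uniform hypergraph with co-degree density only $d-\gamma$ nevertheless admits a near-perfect cover by long $\ell$-paths --- this is exactly what pins down the constant $\frac{4(k-\ell)-1}{4(k-\ell)^2}$ and the precise form of the $(\ell,\xi)$-extremality notion, and obtaining the matching bound is the crux of the whole argument. Second, Part~(ii): since $(k-2)$-degree information constrains links far less tightly than the $(k-1)$-degree used in Theorem~\ref{theorem:Han-Schacht}, the connecting gadgets are longer, the neighbourhood-intersection counting is correspondingly more delicate, and one must ensure that each partial connection leaves the exactly-$\ell$ overlap structure intact everywhere else.
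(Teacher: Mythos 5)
Your architecture coincides with the paper's: an absorbing path, a reservoir for connections, an almost-perfect cover by $\ell$-paths obtained through weak hypergraph regularity and a fractional tiling in the reduced hypergraph, and a final absorption step. But as a proof the proposal has a genuine gap: the step that carries the whole content of the theorem --- your Part~(iv), the claim that a non-extremal (reduced) hypergraph with $(k-2)$-degree density $\frac{4(k-\ell)-1}{4(k-\ell)^2}-O(\gamma)$ admits a near-perfect cover by long $\ell$-paths --- is only announced, not argued, and you yourself flag it as ``the crux.'' In the paper this is the Path-Tiling Lemma (Lemma~\ref{lemma:tiling}), whose engine is Lemma~\ref{lemma:hom-tiling}: one builds a $\beta$-$\hom(\ccc_\ell)$-tiling by an iterated local-improvement scheme, shows via K\"onig's theorem that if no improvement exists then almost all bipartite link graphs $L_K(\ccc,\ccc')$ have exactly $4(k-\ell)-1$ edges, all incident to two special vertices, and then converts this rigid structure (through the weight function $b=1-a$ and a matching/averaging argument) into $\beta$-fractional $(\ell,C\gamma)$-extremality of the reduced hypergraph, which is transferred back to $(\ell,\xi)$-extremality of $\cch$, contradicting the hypothesis. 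None of this, nor the bookkeeping that passes non-extremality from $\cch$ to the hypergraph left after deleting the absorber and reservoir and then to the fractional notion on the reduced hypergraph, appears in your sketch; so the statement is not proved.

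Two smaller points. First, you invoke non-extremality in Part~(i) to guarantee absorbers for every $(k-\ell)$-set, but this is unnecessary: the paper's Absorbing Lemma (Lemma~\ref{lemma:absorbing}, via Claim~\ref{claim:absorber}) produces $\Omega(n^{3k-2\ell})$ three-edge absorbers for every such set from the bare condition $\delta_{k-2}(\cch)\geq\eta\binom{n}{2}$ for any fixed $\eta>0$; non-extremality is needed only in the tiling step. Second, Part~(ii) glosses over the one genuinely delicate case of the connection, namely $2\ell=k-1$, where a single edge cannot bridge two end-$\ell$-tuples through leftover vertices and the paper builds a four-edge path via extendable triples; a complete write-up must treat this case separately.
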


We remark that for $k=3$ and $\ell=1$, the corresponding version of Theorem~\ref{theorem:hamilton} appeared in the so-called non-extremal case of the sharp version of Theorem~\ref{theorem:Buss-Han-Schacht} in~\cite{HaZh15}.
As a result, it will be sufficient to address the extremal case for a sharp version of Theorem~\ref{theorem:main} and we shall return to this in the near future~\cite{BMSCH3b}.
For details about this approach see~\cites{HaZh15,HaZh15b}.
It is easy to check that if $\delta_{k-2}(\cch)\geq\left(\frac{4(k-\ell)-1}{4{(k-\ell)}^2}+\gamma\right)\binom{n}{2}$, then there exists $\xi=\xi(k,\ell,\gamma)>0$ such that $\cch$ is not $(\ell,\xi)$-extremal.
Consequently, Theorem~\ref{theorem:main} follows from Theorem~\ref{theorem:hamilton}.

\section{Main lemmas}
\label{sec:main}

\subsection{Outline of the proof of Theorem~\ref{theorem:hamilton}}
\label{sec:pf-outline}

The proof follows the \textit{Absorbing Method} introduced by R\"odl, Ruci\'nski, and Szemer\'edi in~\cite{RoRuSz06}.
For this, we derive the following lemmas: the Absorbing Lemma~(Lemma~\ref{lemma:absorbing}), the Reservoir Lemma~(Lemma~\ref{lemma:reservoir}), and the Path-Tiling Lemma~(Lemma~\ref{lemma:tiling}).

We call an $\ell$-path $\cca\subseteq \cch$ a \emph{$\beta$-absorbing path} for a $k$-uniform hypergraph~$\cch$ if for every subset $U\subset V(\cch)$ of size at most $\beta n$ there exists an $\ell$-path~$\ccq$ such that $V(\ccq)=V(\cca)\cup U$ and~$\ccq$ has the same ends as $\cca$, for some $\beta > 0$.
The Absorbing Lemma~(Lemma~\ref{lemma:absorbing}) ensures the existence of a $\beta$-absorbing path~$\cca$.
This reduces the problem of finding a Hamiltonian~$\ell$-cycle to that of finding an almost spanning $\ell$-cycle that contains~$\cca$.

To obtain an almost spanning $\ell$-cycle, we first find a bounded number (independent of~$|V(\cch)|$) of $\ell$-paths covering almost all vertices of~$V(\cch)\setminus \cca$ and then connect them using only vertices from a small set, a so-called \emph{reservoir set} that we fix beforehand.
The Reservoir Lemma~(Lemma~\ref{lemma:reservoir}) shows that it is possible to find this reservoir set~$R$ such that any bounded number of disjoint $\ell$-paths can be connected to an $\ell$-cycle, only using vertices from~$R$.

We can choose the sizes of~$\cca$ and~$R$ small enough, so that the remaining hypergraph satisfies almost the same degree condition as~$\cch$.
Then the Path-Tiling Lemma~(Lemma~\ref{lemma:tiling}) ensures the existence of a collection of $\ell$-paths covering almost all vertices of $V(\cch)\setminus~{(\cca \cup R)}$.
This is the only point in the proof where we use the exact value of the degree condition and the non-extremality of~$\cch$.
(In fact, a proof for the corresponding version of the Path-Tiling Lemma for a direct proof of Theorem~\ref{theorem:main}, which allows us to utilise a
slightly larger degree condition, is a bit simpler.)

As mentioned before, the paths from the Path-Tiling Lemma and~$\cca$ can be connected by using vertices from~$R$ to an almost spanning $\ell$-cycle containing~$\cca$.
Since this $\ell$-cycle contains almost all vertices of $\cch$, the absorbing property of~$\cca$ allows us to absorb the leftover vertices, i.e.\ vertices that are not contained in any of the $\ell$-paths and vertices that were not used to connect the $\ell$-paths.
The resulting $\ell$-cycle is the desired Hamiltonian~$\ell$-cycle.

\subsection{Connecting}
\label{sec:con}

In order to construct an almost spanning $\ell$-cycle of a $k$-uniform hypergraph~$\cch$, we first find some $\ell$-paths and connect them at their ends.
Recall that, given an $\ell$-path $\ccp=v_1\cdots v_t$ in $\cch$, the ends of~$\ccp$ are the sets $\{v_1,\ldots,v_\ell\}$ and $\{v_{t-\ell+1},\ldots,v_t\}$.
As usual, the \emph{size} of an $\ell$-path is the number of its edges.
For a collection of $2m$ mutually disjoint sets of $\ell$ vertices $X_i,Y_i$ we say that a set of $\ell$-paths $\cct_1,\ldots,\cct_m$ \emph{connects} ${(X_i,Y_i)}_{i\in[m]}$ if all paths are vertex-disjoint and $X_i$ and $Y_i$ are the ends of~$\cct_i$, for all $i\in[m]$.
The connections for a given collection of disjoint $\ell$-paths are given by the following lemma.
In addition the lemma allows to restrict the edges used for the connection to a given ``well-connected'' subset~$R$ of vertices.

\begin{lemma}[Connecting Lemma]
\label{lemma:connecting}
Let $\eta>0$ and let $k\geq 4$, $1\leq\ell<k/2$, and $m\geq 1$ be integers.
Let $\cch=(V,E)$ be a $k$-uniform hypergraph and $R\subset V$ with $\vert R\vert=r\geq 32km/\eta^3$.
For every collection of~$2m$ mutually disjoint sets $X_i$, $Y_i\in \binom{V}{\ell}$ the following holds for~${V'=\bigcup_{i\in[m]} (X_i\cup Y_i)\cup R}$.

If $\big|N(K)\cap \binom{R}{2}\big|\geq \eta\binom{r}{2}$ for all $K\in\binom{V'}{k-2}$, then there exist $\ell$-paths $\cct_1,\ldots,\cct_m$ of size at most four connecting ${(X_i,Y_i)}_{i\in[m]}$, which contain vertices from $V'$ only.
\end{lemma}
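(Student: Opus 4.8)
\noindent\emph{Proof plan.}
The plan is to construct the paths $\cct_1,\dots,\cct_m$ greedily, one pair at a time, so that each $\cct_i$ is an $\ell$-path of size exactly four (hence at most four) with ends $X_i$ and $Y_i$ and all of its internal vertices in $R$. Assume $\cct_1,\dots,\cct_{i-1}$ have been built; an $\ell$-path of size four has $4(k-\ell)+\ell<4k$ vertices, so fewer than $4km$ vertices of $V'$ have been used, and I write $R^{\circ}$ for the set of currently unused vertices of $R$, so $|R^{\circ}|\ge r-4km$. Every elementary step of the construction will be of the following kind: some $K\in\binom{V'}{k-2}$ has already been placed and I must pick a pair of vertices of $R$ that, together with $K$, forms an edge of $\cch$ and avoids all previously used vertices. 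By hypothesis there are at least $\eta\binom r2$ candidate pairs in $\binom R2$; at most $4km\cdot r$ of them touch a used vertex; and since $r\ge 32km/\eta^3$ an admissible pair remains. The same bound (using now that $r$ is a large power of $1/\eta$) will also give room for the slightly more careful selections needed below.

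To connect a single pair $(X_i,Y_i)$, think of the target path as four consecutive edges $e_1,e_2,e_3,e_4$, with consecutive ones meeting in an $\ell$-set, $X_i$ occupying the first $\ell$ positions of $e_1$ and $Y_i$ the last $\ell$ positions of $e_4$. Each edge $e_j$ splits into the one or two $\ell$-sets it shares with its neighbours, its $X_i$- or $Y_i$-part when $j\in\{1,4\}$, and its $k-2\ell\ge 1$ \emph{private} vertices. I process the four edges in a suitable order, maintaining the invariant that, when an edge $e$ is treated, exactly two of its $k$ positions are still unassigned: the $X_i$- and $Y_i$-positions are fixed from the start, each private position is filled by an arbitrary vertex of $R^{\circ}$, and each position shared with an already-treated edge inherits its vertex from there. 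Applying the hypothesis to the $(k-2)$-set of assigned positions of $e$ then yields the two missing vertices and makes $e$ an edge. Since $e_1$ (resp.\ $e_4$) has $k-\ell\ge 2$ positions outside $X_i$ (resp.\ $Y_i$), the two unassigned positions can always be taken among those, so no prescribed vertex is ever disturbed.

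If $k-2\ell\ge 2$, it is enough to treat $e_1,e_2,e_3,e_4$ in order, filling all shared positions arbitrarily and leaving exactly two private positions unassigned in each edge; the invariant then holds throughout and the path is complete. The genuine difficulty is the case $k-2\ell=1$, i.e.\ $\ell=(k-1)/2$: now each edge has a unique private vertex, so an edge all of whose shared positions are already fixed retains only one unassigned position, which is too few for the hypothesis, and processing $e_1,e_2,e_3,e_4$ in order this happens to $e_4=L_3\cup\{w\}\cup Y_i$, where $L_3=e_3\cap e_4$ got fixed when $e_3$ was treated. The key point is that $L_3$ was not forced: while treating $e_3$ I may fill $\ell-1$ of the $e_3$--$e_4$ link arbitrarily in $R^{\circ}$ and obtain the last vertex of the link from a codegree step with $\ge\eta\binom r2$ options, so $\Omega(\eta r^{\ell})$ choices for $L_3$ are in fact available. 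Using that every $(k-2)$-set inside $R$ has at least $\eta\binom r2$ completing pairs in $R$ --- equivalently, that every small subset of $R$ has codegree $\Omega(\eta r^{\,t})$ into $R$ for the relevant $t$ --- a double count shows that the family $\mathcal W$ of $\ell$-sets $L\subseteq R$ for which $L\cup Y_i$ extends by $\Omega(\eta r)$ vertices of $R$ to an edge has size $\Omega(\eta r^{\ell})$; one then steers the treatment of $e_3$ (and, if necessary, of $e_2$), by one or two rounds of averaging over the ``arbitrarily filled'' vertices, so that the link $L_3$ it produces lies in $\mathcal W$ while still satisfying the codegree constraint making $e_3$ an edge. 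Once $L_3\in\mathcal W$, the private vertex $w$ completing $e_4$ can be picked among $\Omega(\eta r)$ vertices of $R^{\circ}$, and we are done. I expect this last point --- in essence, forcing one reservoir pair to obey two codegree constraints simultaneously, which is what makes the nested averaging (and hence the polynomial size of $R$ in $1/\eta$) necessary --- to be the main obstacle; everything else is routine bookkeeping.
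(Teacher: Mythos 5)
Your reduction to elementary ``complete a $(k-2)$-set by a reservoir pair'' steps, and the whole argument in the case $k-2\ell\ge 2$, are fine (the paper even gets by with a single edge there, but using four is harmless). The gap is exactly at the step you yourself flag as the main obstacle, the case $2\ell=k-1$: with the link $L_2=e_2\cap e_3$ already fixed, you must find an edge $e_3=L_2\dcup L_3\dcup\{w\}$ inside the unused part of $R$ with $L_3\in\ccw$, where $\ccw$ is the family of $\ell$-sets $L$ with $\deg_R(Y_i\cup L)\ge \eta r/4$. The two facts you propose to combine --- that $\ccw$ is large and ``locally spread'' (every $(\ell-1)$-set $B\subseteq R$ has at least $\eta r/4$ extensions into $\ccw$), and that every $(k-2)$-set $L_2\cup B$ has at least $\eta\binom{r}{2}$ completing pairs --- do \emph{not} imply this, no matter how one averages over the arbitrarily filled vertices: for $k=5$, $\ell=2$ split $R=A\dcup B$ with $|A|=|B|=r/2$, let the link of $L_2$ (the $3$-sets $C$ with $L_2\cup C$ an edge) consist of all $3$-sets lying inside $A$ or inside $B$, and let $\ccw$ be the set of crossing pairs; both spread conditions hold comfortably for small $\eta$, yet no admissible $e_3$ has a $2$-subset of $e_3\setminus L_2$ in $\ccw$. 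Hence the ``steering'' claim, if true at all, must use the degree hypothesis for further $(k-2)$-sets (for instance mixed ones involving $Y_i$) in an essential way, and your sketch supplies no such argument; ``one or two rounds of averaging'' is precisely where the proof is missing.

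For comparison, the paper never has to hit a prescribed family with the leftover of an edge through an already fixed $\ell$-set, because it builds the connection from the ends inward rather than sequentially. It first certifies (by the same observation you use for $\ccw$) an abundance of \emph{extendable triples} $(x,L,y)$, i.e.\ $(\ell+1)$-tuples such that both $X_j\cup L\cup\{x\}$ and $Y_j\cup L\cup\{y\}$ have at least $\eta r/4$ one-vertex extensions in $R$; it then fixes an $(\ell-2)$-set $S$, applies the codegree hypothesis to the fully specified $(k-2)$-sets $S\cup L\cup\{x,y\}$, and pigeonholes over the resulting pairs to find one pair $M$ such that $M'=M\cup S$ forms edges with two disjoint extendable triples $(x,L,y)$ and $(x',L',y')$. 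The two middle edges of the connecting path then share the link $M'$, and the two end edges are completed inside the pre-certified large neighbourhoods of $X_j\cup L\cup\{x\}$ and $Y_j\cup L'\cup\{y'\}$. In that architecture every application of the hypothesis is to a $(k-2)$-set of vertices chosen beforehand, so the difficulty your plan runs into never arises; your sequential construction would need a genuinely new lemma at the $e_3$-step (or a restructuring along the paper's lines) before it constitutes a proof.
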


\begin{proof}
Given $\eta>0$ and integers $k\geq 4$, $1\leq \ell<k/2$ and $m\geq 1$, let $\cch=(V,E)$, $R\subseteq V$, and $X_i,Y_i$ for $i\in[m]$ satisfy the assumptions of the lemma.

Suppose we have constructed $\ell$-paths $\cct_1,\ldots,\cct_{j-1}$ each of size at most four connecting the pairs ${(X_i,Y_i)}_{i\in[j-1]}$ for some $j\leq m$ using only vertices from $\bigcup_{i\in[m]} (X_i\cup Y_i)\cup R$.
We want to construct a path $\cct_j$ with ends~$X_j$ and~$Y_j$.
We define $F_j=\bigcup_{i\in[m]} (X_i\cup Y_i)\cup \bigcup_{i\in [j-1]}V(\cct_i)$ as the set of \emph{forbidden vertices for~$\cct_j$}.

If $k-2\geq 2\ell= |X_j\cup Y_j|$, fix a set~$Z$ of size $k-2-2\ell$ from~$R\setminus F_j$.
Since $|R|=r\geq  32km/\eta^3$, we know that
\[
    \left|N(X_j\cup Y_j\cup Z)\cap \binom{R}{2}\right|\geq \eta\binom{r}{2}>\binom{r}{2}-\binom{r-4km }{2}\geq \binom{r}{2}-\binom{|R\setminus F_j|}{2}.
\]
Hence, there exists a hyperedge $X_j\cup Y_j\cup Z'$ with $Z'\subseteq R\setminus F_j$, which realizes the path~$\cct_j$.

It is left to consider the case that $2\ell=k-1$.
See Figure~\ref{fig:Connecting} for a drawing of the path we will construct in this case.
For a set $A\subseteq V$, let $N_A(S) = N(S) \cap \binom{A}{k-|S|}$.

\begin{figure}
    \begin{tikzpicture}
        \draw (0,0) ellipse (30pt and 15 pt) node (M') [] {$M'$};
        \draw (0,1.75) ellipse (25pt and 12 pt) node (L) [] {$L$};
        \draw (0,-1.75) ellipse (25pt and 12 pt) node (L') [] {$L'$};
        \draw (-3,1.75) ellipse (30pt and 15 pt) node (X) [] {$X_j$};
        \draw (3,-1.75) ellipse (30pt and 15 pt) node (Y) [] {$Y_j$};

        \draw [fill=black](-1.65,1.75) circle(2pt) node[below=2pt]{$v$};
        \draw [fill=black](-1.1,1.75) circle(2pt) node[below=2pt]{$x$};
        \draw [fill=black](1.25,1.75) circle(2pt) node[below=2pt]{$y$};;

        \draw [fill=black](-1.25,-1.75) circle(2pt) node[label=$x'$]{};;
        \draw [fill=black](1.1,-1.75) circle(2pt) node[label=$y'$]{};;
        \draw [fill=black](1.65,-1.75) circle(2pt) node[label=$v'$]{};;

        \pgfsetcornersarced{\pgfpoint{5mm}{5mm}}
        \draw (-4.25,1) rectangle (1,2.5);
        \draw (4.25,-1) rectangle (-1,-2.5);
        \draw (-1.35,2.275) rectangle (2,-0.7);
        \draw (1.35,-2.275) rectangle (-2,0.7);
    \end{tikzpicture}
    \caption{The path connecting $(X_j,Y_j)$.}
    \label{fig:Connecting}
\end{figure}
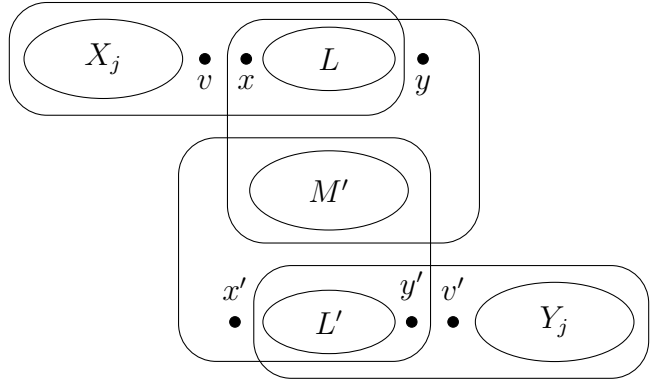

\begin{observation}
\label{obs:proof_connecting}
For any $Z\in\{X_j,Y_j\}$ and $L\in\binom{R\setminus F_j}{\ell-1}$, there are at least~$\eta r/4$ many vertices $z\in R\setminus(F_j \cup L)$ with $|N_{R\setminus F_j}(Z\cup L \cup \{z\})|\geq \eta r /4$.
\end{observation}

To see the observation note that we can consider~$N_{R\setminus F_j}(Z\cup L)$ as the edge set of a $2$-graph with vertex set $R\setminus (F_j\cup L)$.
Since $r\geq 32km/\eta^3$, it follows from the degree condition of~$\cch$ into the set~$R$ that this graph has edge density at least~$\eta/2$
and the observation follows.

Let $L\in \binom{R\setminus F_j}{\ell-1}$ and let $x$, $y\in R\setminus (F_j \cup L)$ be distinct.
We say that $(x,L,y)$ is an \emph{extendable triple} in $R\setminus F_j$ if
\[
    |N_{R\setminus F_j}(X_j\cup L\cup \{x\})|\geq \eta r/4
    \qand
    |N_{R\setminus F_j}(Y_j\cup L\cup \{y\})|\geq \eta r/4.
\]
The observation yields at least $(\eta r/4)(\eta r/4-1)>(\eta r/8)^2$
extendable triples $(x,L,y)$ for any fixed $L\in \binom{R\setminus F_j}{\ell-1}$.

Given $S \in \binom{R\setminus F_j}{\ell -2}$ and an extendable triple $(x,L,y)$ disjoint from $S$, $S\cup L\cup\{x,y\}$ is a $(k-2)$-element set.
Consequently, the minimum degree condition of the lemma yields at least $\eta\binom{r}{2}$ pairs $M\in\binom{R}{2}$ such that $S\cup M\cup L\cup \{x, y\}$ is an edge of $\cch$.
Moreover, similarly as in the proof of the observation at least $(\eta/2)\binom{|R\setminus F_j|}{2}$ of these pairs avoid $F_j$.
Since this is true for every extendable triple and there are at least $\binom{|R\setminus F_j|}{\ell-1}(\eta r/8)^2$ extendable triples, there exists an $M\in \binom{R\setminus F_j}{2}$ that, together with $S$, forms an edge of $\cch$ with at least $(\eta/2)(\eta r/8)^2 \binom{|R\setminus F_j|}{\ell-1}$ extendable triples.
Since $r\geq 32km/\eta^3$, this is more than the number of triples that any single extendable triple can intersect with, so there exist two completely disjoint extendable triples $(x,L,y)$ and $(x',L',y')$ that form an edge of $\cch$ together with~$M' = M \cup S$.

By the definition of extendable triples we have
\begin{align*}
    \big|N_{R\setminus F_j}(X_j\cup L\cup\{x\})\big|&\geq \eta r/4>k+1=\big|M'\cup L'\cup\{x',y',y\}\big|\\
\intertext{and}
    \big|N_{R\setminus F_j}(Y_j\cup L'\cup\{y'\})\big|&\geq \eta r/4>k+2=\big|M'\cup L\cup\{x,y,x'\}\big|+1.
\end{align*}
Consequently there are $v,v'\in R\setminus F_j$ such that the hyperedges
\[
    \{X_j\cup L\cup\{v,x\}\},\quad
    \{M'\cup L\cup\{x,y\}\},\quad
    \{M'\cup L'\cup\{x',y'\}\},\qand
    \{Y_j\cup L'\cup\{y',v'\}\}
\]
are edges of $\cch$, which form a path of size $4$ connecting
$(X_j,Y_j)$.
\end{proof}

In the main proof we will connect $\ell$-paths to an almost spanning $\ell$-cycle.
The Reservoir Lemma (stated below) ensures the existence of a small set~$R$ such that we can connect an arbitrary collection of at most $2m$ many $\ell$-sets, only using vertices of~$R$.

\begin{lemma}[Reservoir Lemma]
\label{lemma:reservoir}
Let $\eta,\varepsilon>0$ and let $k\geq 4$, $1\leq\ell<k/2$, and $m\geq 1$ be integers.
Then for every sufficiently large $k$-uniform hypergraph $\cch=(V,E)$ on $n$ vertices with $\delta_{k-2}(\cch)\geq \eta \binom{n}{2}$ there is a set $R\subset V$ with $|R|\leq \varepsilon n$ such that the following holds.

For every collection $X_i,Y_i$ for ${i\in[j]}$ of $2j$ mutually disjoint sets of $\ell$ vertices, where $j\leq m$, there exist $\ell$-paths $\cct_1,\ldots,\cct_j$ of size at most $4$ connecting $(X_i,Y_i)_{i\in[j]}$ that, moreover, contain vertices from $\bigcup_{i\in[j]} (X_i\cup Y_i)\cup R$ only.
\end{lemma}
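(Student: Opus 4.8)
The plan is to pick the reservoir set $R\subseteq V$ at random and to show that, with high probability, $R$ is ``well-connected'' in exactly the sense required by the Connecting Lemma (Lemma~\ref{lemma:connecting}); that lemma will then supply the connecting paths for any admissible collection $(X_i,Y_i)_{i\in[j]}$ with $j\le m$.

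First I would include every vertex of $V$ in $R$ independently with probability $p=\varepsilon/2$. A Chernoff bound gives $\varepsilon n/4\le |R|\le\varepsilon n$ with probability $1-e^{-\Omega(n)}$. The heart of the argument is the event that, for a suitable constant $\eta'=\eta'(\eta,\varepsilon)>0$ (one may take $\eta'=\eta/16$),
\[
    \Big|N(K)\cap\tbinom{R}{2}\Big|\ \ge\ \eta'\tbinom{|R|}{2}\qquad\text{for every }K\in\tbinom{V}{k-2}.
\]
Fix $K$ and set $X_K=\big|N(K)\cap\binom{R}{2}\big|=\sum_{e\in N(K)}\mathds 1[e\subseteq R]$. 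Since $|N(K)|\ge\delta_{k-2}(\cch)\ge\eta\binom n2$ we have $\mathbb E[X_K]=|N(K)|p^2=\Theta(n^2)$, so it suffices to show $X_K\ge\tfrac12\mathbb E[X_K]$ with probability $1-e^{-\Omega(n)}$, because $\tfrac12\mathbb E[X_K]\ge\tfrac{\eta}{2}\binom n2 p^2\ge\eta'\binom{\varepsilon n}{2}\ge\eta'\binom{|R|}{2}$ on the event $|R|\le\varepsilon n$. The events $\{e\subseteq R\}$ are increasing functions of the independent coordinates $\mathds 1[v\in R]$, and the overlap parameter $\bar\Delta=\sum_{e\ne f,\,|e\cap f|=1}\Pr[e\cup f\subseteq R]$ satisfies $\bar\Delta\le n^3p^3$ because the degrees of the graph $N(K)$ on $V\setminus K$ are at most $n$. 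Janson's inequality (lower-tail form) therefore yields
\[
    \Pr\!\Big[X_K\le\tfrac12\,\mathbb E[X_K]\Big]\ \le\ \exp\!\Big(-\frac{(\mathbb E X_K)^2}{8(\mathbb E X_K+\bar\Delta)}\Big)\ =\ e^{-\Omega(n)},
\]
the implied constant depending only on $\eta$ and $\varepsilon$. A union bound over the at most $n^{k-2}$ choices of $K$, together with the Chernoff estimate for $|R|$, shows that for $n$ large there exists a set $R$ with $\varepsilon n/4\le|R|\le\varepsilon n$ satisfying the displayed density condition for all $K\in\binom V{k-2}$.

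Now fix such an $R$. Given a collection $X_i,Y_i$ ($i\in[j]$, $j\le m$) of $2j$ mutually disjoint $\ell$-sets, put $V'=\bigcup_{i\in[j]}(X_i\cup Y_i)\cup R$. For $n$ large we have $|R|\ge\varepsilon n/4\ge 32kj/(\eta')^3$, and since $V'\subseteq V$ the density condition $\big|N(K)\cap\binom R2\big|\ge\eta'\binom{|R|}{2}$ holds for every $K\in\binom{V'}{k-2}$. Hence the hypotheses of Lemma~\ref{lemma:connecting} are satisfied with $\eta'$ in place of $\eta$ and $j$ in place of $m$, and the lemma produces $\ell$-paths $\cct_1,\dots,\cct_j$ of size at most four connecting $(X_i,Y_i)_{i\in[j]}$ and using vertices of $V'=\bigcup_{i\in[j]}(X_i\cup Y_i)\cup R$ only. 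This is exactly the assertion of the Reservoir Lemma.

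I expect the concentration step to be the only real obstacle: $X_K$ is a sum over pairs that are highly dependent, so a direct Chernoff bound is unavailable, and a naive bounded-differences estimate is far too weak, since a single vertex swap can change $X_K$ by $\Theta(n)$, giving only $O(n^{3/2}\sqrt{\log n})$ control against a mean of order $n^2$. The decisive point is that the graph $N(K)$ has maximum degree $O(n)$, which forces $\bar\Delta$ to be of smaller order than $(\mathbb E X_K)^2$ and makes Janson's inequality deliver the $e^{-\Omega(n)}$ tail needed to survive the union bound over all $(k-2)$-sets; a two-round exposure of $R$ would be an alternative but Janson's inequality is the cleanest route.
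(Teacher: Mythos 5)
Your proposal is correct and takes essentially the same route as the paper: choose $R$ as a random subset, show it inherits a suitably scaled version of the minimum $(k-2)$-degree condition with high probability, and then invoke the Connecting Lemma (with the rescaled density parameter) to connect any collection of at most $m$ pairs of ends. The paper only sketches this, deferring the concentration step to a very similar argument in Bu\ss--H\`an--Schacht (Lemma~6); your use of Janson's inequality plus a union bound over all $(k-2)$-sets is a legitimate way to fill in exactly that step.
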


Lemma~\ref{lemma:reservoir} is a consequence of Lemma~\ref{lemma:connecting}, since one can show that with high probability a suitably sized random subset~$R\subseteq V$ inherits an appropriately scaled minimum degree condition from $\cch$.
As a consequence such a set satisfies the assumptions of Lemma~\ref{lemma:connecting} (with $\eta/2$) and the lemma yields the conclusion of Lemma~\ref{lemma:reservoir} (see, e.g.~\cite{BuHaSc13}*{Lemma~6} for a very similar argument).

\subsection{Absorption}
\label{sec:abs}

Given a $k$-uniform hypergraph~$\cch$ and $U \subset V$ with $|U|\in (k-\ell)\mathds{N}$, we say that an $\ell$-path~$\cca$ \emph{absorbs}~$U$ if there exists an $\ell$-path~$\ccq$ with the same ends as~$\cca$ and $V(\ccq) = V(\cca) \cup U$.
At the end of the main proof we will absorb all vertices outside of an almost spanning $\ell$-cycle to obtain a Hamiltonian $\ell$-cycle using an \emph{absorbing path}~$\cca$, i.e.\ a path that can absorb any set~$U$ of small linear size.
The existence of such a path~$\cca$ is given by the following lemma.

\begin{lemma}[Absorbing Lemma]
\label{lemma:absorbing}
For every $\eta$, $\zeta>0$ and all integers $k\geq 4$ and $1\leq\ell<k/2$ there exists $\varepsilon>0$ such that the following holds for sufficiently large $n$.
Let $\cch=(V,E)$ be a $k$-uniform hypergraph on $n$ vertices that satisfies $\delta_{k-2}(\cch)\geq \eta \binom{n}{2}$.
Then there is an $\ell$-path~$\mathcal{\cca}$ with $|V(\cca)|\leq \zeta n$ such that for all subsets $U\subset V\setminus V(\cca)$ of size at most $\varepsilon n$ with $|U|\in (k-\ell)\bbn$ there exists an $\ell$-path $\ccq \subset\cch$ with $V(\ccq)=V(\cca)\cup U$ such that $\cca$ and $\ccq$ have the same ends.
\end{lemma}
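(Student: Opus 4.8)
The plan is to follow the standard absorbing strategy of Rödl, Ruciński, and Szemerédi: build $\cca$ by concatenating many small ``absorbing gadgets,'' each of which can locally swallow one chunk of $k-\ell$ vertices, so that collectively they can absorb any set $U$ of size at most $\varepsilon n$. The natural unit to absorb is a set $D$ of $k-\ell$ vertices, since this is exactly the number of vertices one $\ell$-path edge adds beyond the overlap. So first I would fix, for a generic $(k-\ell)$-set $D\subset V$, a notion of \emph{$D$-absorber}: a short $\ell$-path $\ccp_D$ on a bounded number $t=t(k,\ell)$ of vertices, with prescribed ends, such that there is \emph{also} an $\ell$-path on $V(\ccp_D)\cup D$ with the \emph{same} ends. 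Concretely one wants an $\ell$-path whose vertex set has a ``flexible middle'': inserting $D$ just lengthens the path by one more edge. The key combinatorial point is to count, for each fixed $D$, the number of such absorbers.

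Second, I would prove an \emph{absorber-counting claim}: there is a constant $c=c(k,\ell)>0$ such that every $(k-\ell)$-set $D$ has at least $c\,n^{t-1}$ distinct $D$-absorbers in $\cch$ (here $t$ is the number of vertices in an absorber, so $n^{t-1}$ is the right order after one vertex is ``pinned''). This is where the degree hypothesis $\delta_{k-2}(\cch)\ge \eta\binom n2$ enters: building each edge of the gadget amounts to extending a $(k-2)$-set by a pair, and the degree condition guarantees $\Omega(n^2)$ choices at each such step, while the ``double use'' of the middle vertices (once in the path without $D$, once in the path with $D$) can be arranged by picking the shared structure greedily and then completing two edges through it. The cleanest way to organize this is to describe one explicit gadget template (for the two regimes $2\ell\le k-1$ versus $2\ell\ge k$, mirroring the case split in the Connecting Lemma) and verify that each edge in the template can be chosen in $\Omega(n^2)$ ways given the previous ones, avoiding the bounded set of already-used vertices.

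Third, with the counting claim in hand, I would use a probabilistic selection: choose a random family $\mathcal F$ of vertex-disjoint absorbers by picking each potential absorber independently with probability $p\approx \zeta n / (C n^{t})$ for a suitable constant $C$, then delete one absorber from every intersecting pair and every absorber sharing a vertex with a fixed ``dangerous'' set. A routine first- and second-moment computation (Markov plus a Chernoff/expectation bound on the number of intersecting pairs) shows that with positive probability one keeps a subfamily $\mathcal F'$ with $|\mathcal F'|\le \zeta n/t$ that is vertex-disjoint and still contains, for every $(k-\ell)$-set $D$, at least $\varepsilon n$ absorbers for $D$, where $\varepsilon$ is chosen small in terms of $\zeta,\eta,k,\ell$ — in particular $\varepsilon n / (k-\ell)$ bounds how many chunks we ever need to absorb. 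I would then \emph{connect} the absorbers in $\mathcal F'$ into a single $\ell$-path $\cca$; since $|\mathcal F'|$ is bounded linearly and each absorber has bounded size, this connection step is a direct application of the Connecting Lemma (Lemma~\ref{lemma:connecting}) or equivalently can be folded into the reservoir mechanism, giving $|V(\cca)|\le\zeta n$.

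Finally, the absorption itself: given any $U\subset V\setminus V(\cca)$ with $|U|\le\varepsilon n$ and $|U|\in(k-\ell)\bbn$, partition $U$ arbitrarily into $|U|/(k-\ell)\le\varepsilon n/(k-\ell)$ blocks of size $k-\ell$, and greedily assign each block $D$ to a still-unused $D$-absorber inside $\cca$ — possible because each $D$ has at least $\varepsilon n$ absorbers in $\mathcal F'$ and we use at most $\varepsilon n/(k-\ell)<\varepsilon n$ of them in total. Re-routing each used absorber through its longer internal path (and leaving the unused ones untouched) yields an $\ell$-path $\ccq$ with $V(\ccq)=V(\cca)\cup U$ and the same ends as $\cca$. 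The main obstacle I anticipate is the absorber-counting claim: one has to pin down a gadget template that simultaneously (i) is a genuine $\ell$-path with controllable ends, (ii) admits the alternate longer path absorbing $D$, and (iii) is abundant under only a $(k-2)$-degree hypothesis — the last point is delicate because a $(k-2)$-degree bound controls pairs, not single vertices, so the template must be built so that every completion step adds exactly two new vertices, which is why the parity split $2\ell$ versus $k-1$ reappears here just as in the Connecting Lemma.
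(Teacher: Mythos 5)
Your overall plan coincides with the paper's: gadgets that absorb one $(k-\ell)$-set at a time, a counting claim, random selection with deletion of intersecting pairs, connection of the surviving gadgets into one path via the Connecting Lemma with $R=V$, and a final greedy assignment of the at most $\varepsilon n/(k-\ell)$ blocks of $U$. However, two points are genuine gaps. First, your absorber-counting claim is off by a factor of $n$, and as stated it breaks your own selection step. A $D$-absorber is vertex-disjoint from $D$ (otherwise it could not absorb a block of $U\subset V\setminus V(\cca)$), so nothing is ``pinned'': the claim one needs, and the one the paper proves (Claim~\ref{claim:absorber}: at least $\widetilde{\eta}^5n^{q}$ absorbing $3$-edge $\ell$-paths for every $S$, where $q=3k-2\ell$), is that a \emph{constant proportion} of all $t$-tuples are $D$-absorbers, i.e.\ $\Omega(n^{t})$ of them. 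With only $cn^{t-1}$ absorbers per $D$ and your selection probability $p\approx\zeta n/(Cn^{t})$, the expected number of selected $D$-absorbers is $O(1)$, so no Chernoff--union-bound argument can deliver ``at least $\varepsilon n$ absorbers for every $D$ after deletions,'' and the absorption of linearly many blocks collapses.

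Second, the actual construction of the gadget -- which is the heart of the paper's proof and the only place where the $(k-2)$-degree hypothesis is genuinely exploited -- is exactly what you defer as the ``main obstacle,'' and the hints you give for it do not work. Your proposed case split ($2\ell\le k-1$ versus $2\ell\ge k$) is vacuous, since $\ell<k/2$ forces $2\ell\le k-1$ always, and your constraint that every completion step must add exactly two new vertices is unnecessary: $\delta_{k-2}(\cch)\ge\eta\binom{n}{2}$ implies by averaging that $\deg(A)\ge\widetilde{\eta}n^{k-|A|}$ for every $A$ with $|A|\le k-2$ (inequality~\eqref{eq:consequence_mindegree} in the paper), so sets of any size up to $k-2$ can be extended in $\Omega(n^{k-|A|})$ ways. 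The paper's gadget splits $S=S_1\cup S_2$ with $\max\{0,3\ell-k\}\le|S_1\cap S_2|<\ell$ and $|S_1|,|S_2|\ge\ell$, chooses edges $f_1\supset S_1$ and $f_2\supset S_2$ overlapping in exactly $\ell$ vertices, and then builds a $3$-edge $\ell$-path $e_1,e_2,e_3$ disjoint from $S$ so that $e_1,f_1,f_2,e_3$ is a $4$-edge $\ell$-path with the same ends; verifying that each of these choices can be made in the required number of ways is the content you still have to supply. The remaining steps of your proposal (Markov for intersecting pairs, deletion, connection, greedy absorption) do match the paper once the counting claim is corrected to $\Omega(n^{t})$.
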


\begin{proof}
Let $\eta, \zeta > 0$ and let $k \geq 4$ and $1 \leq \ell < k/2$ be integers, and assume w.l.o.g.\ that $\eta, \zeta \le 1$.
Fix auxiliary constants
\[
    \widetilde{\eta}=\frac{\eta}{4k!} \qand q = 3k - 2\ell
\]
and set
\[
    \varepsilon = \frac{\zeta \widetilde{\eta}^{10} }{56k q^2}.
\]
Let $n$ be sufficiently large and let $\cch = (V, E)$ be a $k$-uniform hypergraph on $n$ vertices that satisfies $\delta_{k - 2}(\cch) \geq \eta \binom{n}{2}$.
First, we will show that for any $S \in \binom{V}{k - \ell}$ there exist many, i.e.~$\Omega(n^q)$, $3$-edge $\ell$-paths that absorb $S$
(see Claim~\ref{claim:absorber} below).
For that we will use the following consequence of the minimum degree condition.
Let $A, B \subset V(\cch)$ be disjoint sets of vertices with $|A| \leq k - 2$ and $|B| \leq q+k$.
Then,
\begin{equation}
    \label{eq:consequence_mindegree}
    \deg_{\cch[V\backslash B]}(A) \geq \frac{(n-|A|)\cdot\,\cdots\,\cdot (n-k+3)}{(k-|A|)!} \cdot \eta \binom{n}{2}-|B|n^{k-|A|-1}  \geq \widetilde{\eta} n^{k - |A|}.
\end{equation}

\begin{claim}
\label{claim:absorber}
For every $S \in \binom{V}{k - \ell}$ there exist at least $\widetilde{\eta}^5 n^{q}$ many $3$-edge $\ell$-paths that absorb~$S$.
\end{claim}
\begin{proof}
Let $S_1\cup S_2 = S$ be chosen in some way such that
\begin{equation}
    \label{eq:s1s2}
    |S_1| \geq |S_2| \geq |S_1| - 1 \qand \max\{0, 3\ell - k \} \leq |S_1 \cap S_2| < \ell
\end{equation}
and set $s_1 = |S_1|$, $s_2 = |S_2|$, and $s_3 = |S_1 \cap S_2|$.
Clearly, we have
\begin{equation}
    \label{eq:size_S}
    s_1+s_2-s_3=|S|=k-\ell.
\end{equation}
It follows from the choices above that $s_1+s_2\geq 2\ell$. Indeed, since $s_3\geq 3\ell-k$
we have $k-\ell=s_1+s_2-s_3\leq s_1+s_2-3\ell+k$ and, hence, $s_1+s_2\geq 2\ell$.
Furthermore, $s_1\geq s_2\geq s_1-1$ yields
\begin{equation}
    \label{eq:s1s2ell}
    s_1\geq s_2\geq \ell.
\end{equation}

Consequently, $|S_1| > |S_1 \cap S_2|$ (see \eqref{eq:s1s2}) and $s_1 < k-\ell$ by \eqref{eq:size_S}.
We then select the following sets.
See Figure~\ref{fig:Absorbing} for a drawing of the chosen sets and edges containing them.
In each step, we will only select sets that are disjoint from~$S$ and everything chosen in previous steps.
\begin{enumerate}[label=\rmlabel]
    \item\label{it:abs1} Since $s_1 \leq k - \ell - 1 \leq k - 2$, by~\eqref{eq:consequence_mindegree} there exist $\widetilde{\eta} n^{k - s_1}$ choices for a $(k-s_1)$-set $X$ such that $f_1 = X \dcup S_1$ is an edge of $\cch$.
        Since $|X|=k-s_1\overset{\eqref{eq:size_S}}{=}\ell+s_2-s_3$ it follows from~\eqref{eq:s1s2ell} that we may partition $X= L_1 \dcup F \dcup F_1$ such that $|L_1| = \ell$, $|F| = \ell - s_3 \overset{\eqref{eq:s1s2}}{>} 0$, and $|F_1| = s_2 - \ell \ge 0$.
    \item Since $k \geq 4$ we have $k-\ell\geq 3$ and, consequently, $s_1\geq \lceil (k-\ell)/2\rceil \geq 2$.
        Thus, by~\eqref{eq:consequence_mindegree} and $|S_2\cup F|=s_2 + \ell - s_3 = k - s_1$, there exist $\widetilde{\eta} n^{s_1}$ choices for a set~$Y$ of size~$s_1$ such that $f_2 = S_2 \dcup F \dcup Y$ is an edge of $\cch$.
        Again owing to~\eqref{eq:s1s2ell} we may partition $Y=L_2 \dcup F_2$ such that $|L_2| = \ell$ and $|F_2| = s_1 - \ell \ge 0$.
  \item Fix $L_1' \subset L_1$ and $L_2' \subset L_2$ subsets of size $\ell - 1$.
      Note that
      \[
          |L_1' \dcup L_2' \dcup F \dcup F_1 \dcup F_2| = |X|+|Y|-2=k - 2.
      \]
      Therefore, there exist at least $\widetilde{\eta} n^2$ choices for a pair of vertices $\{x_1 ,x_2\}$ such that $e_2 = \{x_1, x_2\} \dcup L_1' \dcup L_2' \dcup F \dcup F_1 \dcup F_2$ is an edge of $\cch$.
      \item\label{it:abs4} Since $k \geq 4$ we have $\ell + 1 \leq k - 2$.
      Therefore, there exist $\widetilde{\eta} n^{k - (\ell + 1)}$ choices each for two disjoint edges $e_1$ and $e_3$ such that $\{x_1\} \dcup L_1 \subset e_1$ and $\{x_2\} \dcup L_2 \subset e_3$.
\end{enumerate}
By construction we have
\[
e_1\cap e_2=\{x_1\}\dcup L_1' \qand e_2\cap e_3=\{x_2\}\dcup L_2',
\]
so the edges $e_1,e_2,$ and $e_3$ form an $\ell$-path~$\ccp$ in~$\cch$.
Moreover, since
\[
e_1\cap f_1=L_1,\quad |f_1\cap f_2|=|(S_1\cap S_2)\cup F|\overset{\text{\ref{it:abs1}}}{=}\ell, \qand f_2\cap e_3=L_2,
\]
the edges $e_1,f_1,f_2,$ and $e_3$ form an $\ell$-path~$\ccp'$.
Since $k-\ell-1\geq \ell$, we may select for~$\ccp$ and~$\ccp'$ the same ends in~$e_1$ and~$e_3$.
Moreover, $V(\ccp')=V(\ccp)\cup S$ and, therefore, the $\ell$-path~$\ccp$ absorbs~$S$.
From~\ref{it:abs1}--\ref{it:abs4} it is clear that there are at least $\widetilde{\eta}^5 n^q$ choices for~$\ccp$.
\end{proof}

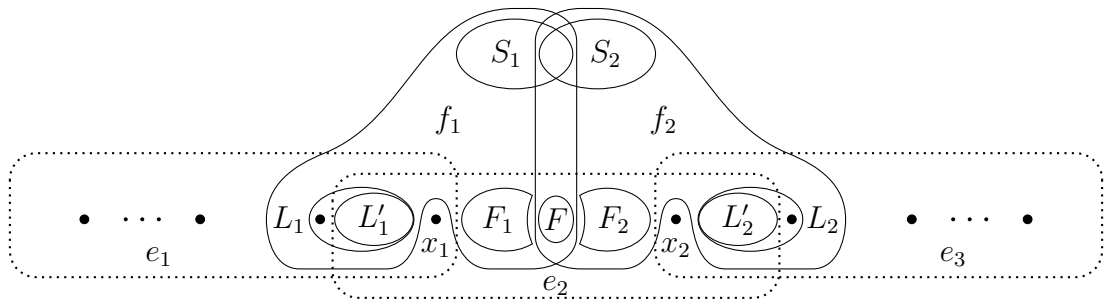
\begin{figure}
  \centering
  \begin{tikzpicture}[scale=1.1]
    \draw (-0.5,2) ellipse (20pt and 12 pt); 
    \node (S_1) at (-0.6,2) [] {$S_1$};
    \draw (0.5,2) ellipse (20pt and 12 pt); 
    \node (S_2) at (0.6,2) [] {$S_2$};

    \node (F_1) at (-0.7,0) [] {$F_1$};
    \begin{scope}
        \clip (-0.28,-0.3) arc [x radius=15pt, y radius=11pt,start angle=310, end angle=50] -- cycle;
        \draw[line width=0.8pt] (-0.28,-0.3) arc [x radius=15pt, y radius=11pt,start angle=310, end angle=50];
        \draw[line width=0.4pt] (-0.28,0.3) arc[x radius=20.5pt, y radius=20.5pt, start angle=155, end angle=206];
    \end{scope}
    \begin{scope}[xscale=-1]
        \clip (-0.28,-0.3) arc [x radius=15pt, y radius=11pt,start angle=310, end angle=50] -- cycle;
        \draw[line width=0.8pt] (-0.28,-0.3) arc [x radius=15pt, y radius=11pt,start angle=310, end angle=50];
        \draw[line width=0.4pt] (-0.28,0.3) arc[x radius=20.5pt, y radius=20.5pt, start angle=155, end angle=206];
    \end{scope}

    \node (F_2) at (0.7,0) [] {$F_2$};
    \draw[rounded corners=5pt] (0,0) ellipse [x radius=6pt, y radius=8pt] ;
    \node (F_2) at (0,0) [] {$F$};

    \draw [fill=black](-1.45,0) circle(1.5pt) node[below=3pt] {$x_1$};
    \draw [fill=black](1.45,0) circle(1.5pt) node[below=3pt] {$x_2$};

    \draw (-2.2,0) ellipse (13.5pt and 9 pt) node (L_1') [] {$L_1'$};
    \draw (-2.35,0) ellipse (18pt and 11 pt) node (L_1) [left=17pt] {$L_1$};    
    \draw [fill=black](-2.85,0) circle(1.5pt) node[label]{};

    \draw (2.2,0) ellipse (13.5pt and 9 pt) node (L_2') [] {$L_2'$};
    \draw (2.35,0) ellipse (18pt and 11 pt) node (L_2) [right=17pt] {$L_2$};    
    \draw [fill=black](2.85,0) circle(1.5pt) node[label]{};

    \draw [fill=black](-4.3,0) circle(1.5pt) node[label]{};
    \draw [fill=black](-4.8,0) circle(0.5pt) node[label]{};
    \draw [fill=black](-5,0) circle(0.5pt) node[label]{};
    \draw [fill=black](-5.2,0) circle(0.5pt) node[label]{};
    \draw [fill=black](-5.7,0) circle(1.5pt) node[label]{};

    \draw [fill=black](4.3,0) circle(1.5pt) node[label]{};
    \draw [fill=black](4.8,0) circle(0.5pt) node[label]{};
    \draw [fill=black](5,0) circle(0.5pt) node[label]{};
    \draw [fill=black](5.2,0) circle(0.5pt) node[label]{};
    \draw [fill=black](5.7,0) circle(1.5pt) node[label]{};

    \draw[rounded corners=10pt,dotted,style=thick] (-6.6,0.8) rectangle (-1.2,-0.7);
        \node (L') at (-4.8,-0.45) [] {$e_1$};
    \draw[rounded corners=10pt,dotted,style=thick] (-2.7,0.55) rectangle (2.7,-0.95);
        \node (L') at (0,-0.8) [] {$e_2$};
    \draw[rounded corners=10pt,dotted,style=thick] (6.6,0.8) rectangle (1.2,-0.7);
        \node (L') at (4.8,-0.45) [] {$e_3$};

        \draw (0.25,-0.2)
            to[in=0,out=270] (-0.3,-0.6) -- (-0.8,-0.6)
            to[out=180,in=280] (-1.25,-0.1)
            to[out=100,in=0] (-1.45,0.25)
            to[out=180,in=80] (-1.65,-0.1)
            to[out=260,in=0] (-2.05,-0.6) -- (-3.1,-0.6)
            to[out=180,in=270] (-3.5,0)
            to[out=90,in=202] (-2.8,0.8)
            to[out=22,in=180] (-0.5,2.55) -- (-0.3,2.55)
            to[out=0,in=90] (0.25,2.15)
            -- cycle;
        \begin{scope}[xscale=-1]
        \draw (0.25,-0.2)
            to[in=0,out=270] (-0.3,-0.6) -- (-0.8,-0.6)
            to[out=180,in=280] (-1.25,-0.1)
            to[out=100,in=0] (-1.45,0.25)
            to[out=180,in=80] (-1.65,-0.1)
            to[out=260,in=0] (-2.05,-0.6) -- (-3.1,-0.6)
            to[out=180,in=270] (-3.5,0)
            to[out=90,in=202] (-2.8,0.8)
            to[out=22,in=180] (-0.5,2.55) -- (-0.3,2.55)
            to[out=0,in=90] (0.25,2.15)
            -- cycle;
        \end{scope}
     \node (f_1) at (-1.3,1.2) [] {$f_1$};
     \node (f_1) at (1.3,1.2) [] {$f_2$};
  \end{tikzpicture}
  \caption{The path $\ccp$, consisting of $e_1,e_2$, and $e_3$, that absorbs $S$.}
  \label{fig:Absorbing}
\end{figure}

Following the scheme from~\cite{RoRuSz06}, let $\ccf\subset {V(\cch)}^q$ be a family of ordered $q$-sets of vertices such that each of these sets are selected from ${V(\cch)}^q$ independently with probability
\[
    p = \frac{4 \varepsilon}{\widetilde{\eta}^5n^{q - 1}}.
\]
An $\ell$-path in ${V(\cch)}^q$ is an ordered set $(v_1,\dots,v_q)$ of vertices such that
\[
    e_1=\{v_1,\dots,v_k\},\quad e_2=\{v_{k-\ell+1},\dots,v_{2k-\ell}\}, \qand e_3=\{v_{2k-2\ell+1},\dots,v_{3k-2\ell}\}
\]
are edges in~$\cch$.
Using Chernoff's inequality, with high probability we have
\[
    |\ccf| \leq 2pn^q = \frac{8\varepsilon}{\widetilde{\eta}^5}n.
\]
By Claim~\ref{claim:absorber}, for each set~$S$ of size $k-\ell$, at least $\widetilde{\eta}^5n^q$ $\ell$-paths in ${V(\cch)}^q$ absorb~$S$.
By Chernoff's inequality, w.h.p.\ for all $S \in \binom{V}{k - \ell}$, there are at least $2\varepsilon n$ $\ell$-paths in~$\ccf$ that absorb~$S$.
The expected value of the number of intersecting pairs of $q$-sets in~$\ccf$ is at most
\[
    q^2n n^{2q-2} p^2 = q^2n^{2q-1} {\left(\frac{4\varepsilon}{\widetilde{\eta}^5n^{q-1}}\right)}^2 = \varepsilon n \frac{16 \zeta}{56k} \le \frac12 \varepsilon n.
\]
So by Markov's inequality, the number of intersecting pairs of $q$-sets in~$\ccf$ is at most $\varepsilon n$ with probability at least $1/2$.

Let $\ccf$ be a family that satisfies the above conditions.
For each of the intersecting pairs in~$\ccf$, delete one of the $q$-sets and let $\ccf'\subset\ccf$ be the remaining family.
We want to use Lemma~\ref{lemma:connecting} with $R=V$, which is sufficiently large as the following calculation shows:
\[
    |\ccf'| \le \frac{8\varepsilon}{\widetilde{\eta}^5}n = \frac{8\zeta\widetilde{\eta}^5}{q^2 56k}n = \frac{8\zeta\eta^5}{q^2 56k {(4k!)}^5}n \le \frac{\eta^3}{32k} n = \frac{\eta^3}{32k} |R|.
\]
So we can connect all $\ell$-paths in $\ccf'$ to an $\ell$-path~$\cca$ with
\[
    |V(\cca)| \leq |\ccf'|\cdot (4k+q)\leq 2pn^q\cdot 7k =\frac{56k}{\widetilde{\eta}^5}\varepsilon n \leq \zeta n
\]
and this path absorbs all sets $U \subset V\backslash V(\cca)$ with $|U| \in (k - \ell)\bbn$ and $|U| \leq \varepsilon n$.
\end{proof}

\subsection{Path-Tiling}
\label{sec:til}

In this part we will find a path-tiling of $\ell$-paths in~$\cch$ that covers all but a small fraction of the vertices of $\cch$.
For that purpose we use the so-called \emph{weak regularity lemma} for hypergraphs, which is the straightforward extension of Szemer\'edi's regularity lemma for graphs~\cite{Sz75}.
Roughly speaking, we will show that there exists a fractional $\ccc_\ell$-tiling, a so-called $\beta$-$\hom(\ccc_\ell)$-tiling in the resulting reduced
hypergraph~$\ccr$ of~$\cch$, where~$\ccc_\ell$ is the $k$-uniform ``cherry''
consisting of two hyperedges that share exactly~$2\ell$ vertices.
The fractional $\ccc_\ell$-tiling of~$\ccr$ will transfer to a path-tiling of~$\cch$.

First, we introduce the standard notation for the regularity lemma.
Let $\cch=(V,E)$ be a $k$-uniform hypergraph and let $V_1,\dots ,V_k$ be non-empty, mutually disjoint subsets of~$V$.
We denote the number of edges with one vertex in each $V_i$ by $e_\cch(V_1,\dots ,V_k)$ and define the density of~$\cch$ w.r.t.\ $(V_1,\dots ,V_k)$ by
\[
d_\cch(V_1,\dots ,V_k)=\frac{e_\cch(V_1,\dots ,V_k)}{|V_1|\cdots |V_k|}.
\]

For $\varepsilon>0$ and $d>0$, a $k$-tuple $(V_1,\dots ,V_k)$ of mutually disjoint subsets of vertices is called \textit{$(\varepsilon,d)$-regular} if for all $k$-tuples $(A_1,\dots ,A_k)$ of subsets $A_i\subseteq V_i$ with $|A_i|\geq \varepsilon |V_i|$, we have
\[
|d_\cch(A_1,\dots ,A_k)-d|\leq \varepsilon.
\]
Moreover, the tuple $(V_1,\dots ,V_k)$ is called \textit{$\varepsilon$-regular} if it is $(\varepsilon,d)$-regular for some~$d>0$.
Below we state the weak hypergraph regularity lemma (see, e.g.~\cites{Ch91,FR92,St90}).
\begin{lemma}[Weak regularity lemma]
\label{lemma:regularity_lemma}
For all integers~$k\geq 2$ and~$t_0\geq 1$ and for every~$\varepsilon>0$, there exists $T_0=T_0(k,t_0,\varepsilon)$ such that for every sufficiently large $k$-uniform hypergraph $\cch =(V,E)$ on $n$ vertices, there exists a partition $V=V_0 \dcup V_1 \dcup \dots
\dcup V_t$ satisfying
\begin{enumerate}[label=\rmlabel]
    \item\label{it:weakreg1} $t_0\leq t\leq T_0$,
    \item\label{it:weakreg2} $|V_1|=\dots =|V_t|$ and $|V_0|\leq \varepsilon n$, and
    \item\label{it:weakreg3} for all but at most $\varepsilon\binom{t}{k}$ many $k$-subsets $\{i_1,\dots ,i_k\}\subset [t]$, the $k$-tuple $(V_{i_1},\dots ,V_{i_k})$ is $\varepsilon$-regular.
\end{enumerate}
\end{lemma}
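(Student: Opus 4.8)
The plan is to prove this by the classical mean-square-density (energy increment) argument of Szemer\'edi, carried out with respect to $k$-tuples of parts rather than pairs. For a partition $\mathcal{P}=\{V_1,\dots,V_t\}$ of $V$ into parts of roughly equal size (with a negligible exceptional part set aside), define its \emph{index}
\[
  \mathrm{ind}(\mathcal{P})=\sum_{\{i_1,\dots,i_k\}\in\binom{[t]}{k}}\frac{k!\,|V_{i_1}|\cdots|V_{i_k}|}{n^{k}}\,d_\cch(V_{i_1},\dots,V_{i_k})^{2},
\]
so that $0\le\mathrm{ind}(\mathcal{P})\le 1$. First I would record two standard facts. \textbf{(Monotonicity.)} If $\mathcal{Q}$ refines $\mathcal{P}$ then $\mathrm{ind}(\mathcal{Q})\ge\mathrm{ind}(\mathcal{P})$; this follows from Jensen's inequality for the square, since the density of $\cch$ with respect to a $k$-tuple of $\mathcal{P}$-parts is a weighted average of the densities with respect to the $k$-tuples of $\mathcal{Q}$-parts refining it. \textbf{(Boost from one irregular tuple.)} If $(V_{i_1},\dots,V_{i_k})$ is not $\varepsilon$-regular, witnessed by $A_j\subseteq V_{i_j}$ with $|A_j|\ge\varepsilon|V_{i_j}|$ and $|d_\cch(A_1,\dots,A_k)-d_\cch(V_{i_1},\dots,V_{i_k})|>\varepsilon$, then splitting each $V_{i_j}$ into $A_j$ and $V_{i_j}\setminus A_j$ raises the portion of the index carried by the refining sub-$k$-tuples by at least $\varepsilon^{k+2}\cdot\frac{k!\,|V_{i_1}|\cdots|V_{i_k}|}{n^{k}}$: the new mean-square minus the old one is the size-weighted variance of the sub-tuple densities, the block $A_1\times\cdots\times A_k$ carries weight at least $\varepsilon^{k}$ and deviates from the mean by more than $\varepsilon$.

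Next I would run the iteration. Start with an arbitrary equitable partition into at least $t_0$ parts, enlarging $t_0$ if necessary so that $\binom{t}{k}k!/t^{k}\ge\tfrac12$ for all $t$ arising. Given a current equitable partition $\mathcal{P}=\{V_1,\dots,V_t\}$ that fails clause~\ref{it:weakreg3}, it has more than $\varepsilon\binom{t}{k}$ irregular $k$-tuples; fix a witnessing family $A_1,\dots,A_k$ for each of them and let $\mathcal{Q}$ be the common refinement obtained by simultaneously cutting every part along all witness sets of the irregular tuples it belongs to. Since each part lies in fewer than $\binom{t-1}{k-1}$ tuples, $\mathcal{Q}$ has at most $t\cdot 2^{\binom{t-1}{k-1}}$ parts. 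Crucially, for distinct $\mathcal{P}$-tuples the refining sub-$k$-tuples in $\mathcal{Q}$ form disjoint families, so the individual boosts add up: combined with monotonicity on the remaining (regular) tuples,
\[
  \mathrm{ind}(\mathcal{Q})\ \ge\ \mathrm{ind}(\mathcal{P})+\varepsilon^{k+2}\cdot\varepsilon\binom{t}{k}\cdot\frac{k!}{t^{k}}(1-o(1))\ \ge\ \mathrm{ind}(\mathcal{P})+\tfrac12\varepsilon^{k+3}\ =:\ \mathrm{ind}(\mathcal{P})+\delta.
\]
As $\mathrm{ind}\le 1$, this refinement step can occur at most $2\varepsilon^{-(k+3)}$ times, so after a bounded number of steps the process halts at a partition satisfying \ref{it:weakreg3}; iterating the bound $t\mapsto t\cdot 2^{\binom{t-1}{k-1}}$ on the number of parts produces the claimed (tower-type) function $T_0(k,t_0,\varepsilon)$.

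Finally, to keep the partition equitable (clause~\ref{it:weakreg2}) and to be able to re-start the argument after each refinement, I would at every stage further chop the parts of $\mathcal{Q}$ into blocks of a common size $\lfloor n/t'\rfloor$ and dump the at most $t'$ leftover vertices, together with the previous exceptional part, into the new exceptional part $V_0$. Keeping $t'\ge t_0\ge\varepsilon^{-1}$ and performing only a bounded number of stages keeps $|V_0|\le\varepsilon n$, while this extra refinement only increases the index by monotonicity and so does not disturb the energy bookkeeping; it also keeps all parts large enough that the witness sets $A_j$ remain nonempty. I expect the main obstacle to be exactly this bookkeeping in the boost step — verifying rigorously that a positive fraction $\varepsilon$ of all $k$-tuples being irregular yields a genuinely additive, hence constant, energy increment even though parts are shared among many tuples (resolved by attributing each boost to the pairwise-disjoint families of refining sub-tuples), and propagating the equitability clean-up through the whole iteration without letting $V_0$ grow past $\varepsilon n$ or the part sizes shrink below the threshold needed in the regularity condition.
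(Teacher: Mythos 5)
The paper does not prove this lemma at all: it is only quoted as the well-known weak hypergraph regularity lemma with references to Chung, Frankl--R\"odl and Steger, so there is no internal proof to compare against; your sketch is precisely the standard energy-increment argument that those sources use. Its main steps are sound: monotonicity of the mean-square density under refinement, the defect Cauchy--Schwarz boost of at least $\varepsilon^{k+2}$ times the tuple weight from each irregular $k$-tuple, the observation that the sub-tuples refining distinct $k$-tuples are disjoint so the boosts add up to a constant increment, and the bound $t\mapsto t\cdot 2^{\binom{t-1}{k-1}}$ on the growth of the number of parts, giving a tower-type $T_0$. One step is stated inaccurately: after forming the common refinement you re-chop the classes into equal blocks and move the leftover vertices into $V_0$, and you assert that this \emph{only increases the index by monotonicity}. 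As written that is false, because removing vertices from the classes is not a refinement of the previous partition, so with your normalisation the index may drop. The standard repair is either to treat the exceptional vertices as singleton classes (then every step, including the clean-up, is a genuine refinement and monotonicity does apply), or to take the new number of parts so large, relative to the bounded number $O(\varepsilon^{-(k+3)})$ of iterations, that the loss per stage is at most half the increment; the same choice is what keeps $|V_0|\le \varepsilon n$ throughout, a point your final paragraph treats somewhat loosely. With that repair, and with the informal $(1-o(1))$ replaced by the explicit factor $(1-\varepsilon)^k$ accounting for parts of size at least $(1-\varepsilon)n/t$, your argument is complete and is, in substance, the proof given in the references the paper cites.
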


A  vertex partition of a hypergraph $\cch$ satisfying~\ref{it:weakreg1}--\ref{it:weakreg3} of
the conclusion of Lemma~\ref{lemma:regularity_lemma} will be referred to as an $\varepsilon$-regular partition.
For $\varepsilon > 0$ and $d>0$, we define the \textit{reduced hypergraph} $\ccr=\ccr(\varepsilon,d)$ of~$\cch$ w.r.t.\ such a partition
as the $k$-uniform hypergraph on the vertex set $[t]$ and
\[
\{i_1,\dots ,i_k\}\in E(\ccr) \Longleftrightarrow (V_{i_1},\dots ,V_{i_k}) \text{ is } (\varepsilon,d')\text{-regular, for some }d' \geq d.
 \]

In typical applications of the regularity lemma, the reduced hypergraph inherits some key features of the given hypergraph $\cch$.
In fact, the following observation shows that the reduced hypergraph inherits approximately the minimum degree condition of the original hypergraph.
A similar result can be found in~\cite{HaSc10}*{Proposition 16} and for completeness we include its proof below.

\begin{lemma}
\label{lemma:degree_clustergraph}
Given $c,\varepsilon,d>0$ and integers $k\geq 3$ and $t_0\geq 2k/d$,
let $\cch$ be a $k$-uniform hypergraph on $n\geq t\geq t_0$ vertices such that
\begin{equation*}
    \delta_{k-2}(\cch)\geq c\binom{n}{2}.
\end{equation*}
If~$\cch$ has an $\varepsilon$-regular partition $V_0\dcup V_1\dcup\dots\dcup V_t$ with reduced hypergraph $\ccr=\ccr(\varepsilon,d)$, then at most $\sqrt{\varepsilon}\binom{t}{{k-2}}$ many $(k-2)$-subsets $K$ of $[t]$ violate
\begin{equation*}
    \deg_{\ccr}(K)\geq (c-2d - \sqrt{\varepsilon})\binom{t}{2}.
\end{equation*}
\end{lemma}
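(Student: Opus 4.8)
The plan is to derive, for every $(k-2)$-subset $K\subseteq[t]$ separately, a lower bound $\deg_\ccr(K)\ge(c-2d)\binom t2-b(K)$, where $b(K)$ is the number of non-$\varepsilon$-regular $k$-subsets of $[t]$ that contain $K$, and then to finish by a Markov-type averaging over all non-$\varepsilon$-regular $k$-tuples. So fix $K=\{i_1,\dots,i_{k-2}\}$ and write $m=|V_1|=\dots=|V_t|$, so $tm=n-|V_0|\ge(1-\varepsilon)n$. For every transversal $S\in V_{i_1}\times\dots\times V_{i_{k-2}}$ the hypothesis gives $\deg_\cch(S)\ge c\binom n2$. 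From $N(S)\subseteq\binom{V}{2}$ I discard: (i) the at most $|V_0|\,n\le\varepsilon n^2$ pairs meeting $V_0$; (ii) the at most $t\binom m2$ pairs inside a single part; (iii) the at most $(k-2)mn$ pairs meeting $V_{i_1}\cup\dots\cup V_{i_{k-2}}$. Since $t\ge t_0\ge 2k/d$ we have $mn\le n^2/t\le dn^2/(2k)$, so (ii) and (iii) together discard at most $\tfrac{2k-3}{2}mn\le\tfrac{(2k-3)d}{4k}n^2$ pairs. Every surviving pair crosses two parts $V_a,V_b$ with distinct $a,b\in[t]\setminus K$, and summing over all $m^{k-2}$ transversals $S$ counts precisely $\sum_{\{a,b\}}e_\cch(V_{i_1},\dots,V_{i_{k-2}},V_a,V_b)$, the number of crossing edges; hence this sum is at least $m^{k-2}\big(c\binom n2-(\varepsilon+\tfrac{(2k-3)d}{4k})n^2\big)$.

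Next I would classify the pairs $\{a,b\}$: if $(V_{i_1},\dots,V_{i_{k-2}},V_a,V_b)$ is $(\varepsilon,d')$-regular for some $d'\ge d$, then $K\cup\{a,b\}\in E(\ccr)$ and its term is at most $m^k$; if it is $\varepsilon$-regular of density below $d$, its term is below $dm^k$; otherwise $K\cup\{a,b\}$ is one of the tuples counted by $b(K)$ and its term is at most $m^k$. Therefore $\sum_{\{a,b\}}e_\cch(\dots)\le\big(\deg_\ccr(K)+b(K)\big)m^k+\binom t2\,dm^k$. Comparing with the lower bound, dividing by $m^k$, and using $\binom n2/m^2\ge\binom t2$ (valid as $tm\le n$ and $m\ge1$), the $O(\varepsilon)$ terms get absorbed into the margin between $\tfrac{2k-3}{2k}d+d=2d-\tfrac{3d}{2k}$ and $2d$ that $t_0\ge 2k/d$ provides — here one uses that $\varepsilon$ is small relative to $d/k$ and that $n$, hence $m$, is large, which is the regime in which the lemma is applied — yielding $\deg_\ccr(K)\ge(c-2d)\binom t2-b(K)$. (When $c-2d-\sqrt\varepsilon<0$, or $\varepsilon\ge1$, the lemma is vacuous.)

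It follows that any $K$ violating $\deg_\ccr(K)\ge(c-2d-\sqrt\varepsilon)\binom t2$ must satisfy $b(K)>\sqrt\varepsilon\binom t2$. On the other hand, $\sum_{K\in\binom{[t]}{k-2}}b(K)$ equals $\binom{k}{k-2}$ times the number of non-$\varepsilon$-regular $k$-tuples, which by Lemma~\ref{lemma:regularity_lemma} is at most $\varepsilon\binom tk$; so, using $\binom{k}{k-2}\binom tk=\binom t{k-2}\binom{t-k+2}{2}\le\binom t{k-2}\binom t2$, we get $\sum_K b(K)\le\varepsilon\binom t{k-2}\binom t2$, whence the number of violating $K$ is at most $\sqrt\varepsilon\binom t{k-2}$. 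The step I expect to be the main obstacle is the middle one: verifying that the cumulative loss — from $V_0$, from pairs inside one part, from pairs meeting the parts of $K$, and from the $\varepsilon$-regular-but-low-density boxes — stays below $2d\binom t2$. The only structural input that makes this work is the slack in $t_0\ge 2k/d$, which leaves an $\Theta(d/k)$ margin that swallows the remaining $O(\varepsilon)$ errors; the density trichotomy, the counting identity $\binom{k}{k-2}\binom tk=\binom t{k-2}\binom{t-k+2}{2}$, and the comparison of $\binom n2$ with $m^2\binom t2$ are all routine.
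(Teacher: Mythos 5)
Your proposal is correct and follows essentially the same route as the paper: a per-$K$ double count of the edges crossing the parts of $K$, with the slack from $t_0\ge 2k/d$ absorbing the discarded pairs, followed by the same Markov-type averaging over the at most $\varepsilon\binom{t}{k}$ irregular $k$-tuples via the identity $\binom{k}{k-2}\binom{t}{k}=\binom{t}{k-2}\binom{t-k+2}{2}$. The only real difference is organisational: the paper works with two auxiliary hypergraphs $\ccd$ (dense tuples) and $\ccn$ (irregular tuples) and proves $\deg_{\ccd}(K)\ge(c-2d)\binom{t}{2}$ unconditionally, whereas you carry the irregular tuples as a per-$K$ error $b(K)=\deg_{\ccn}(K)$ and track the $V_0$/within-part losses explicitly --- losses the paper's accounting quietly omits, which is why your version invokes $\varepsilon$ being small against $d/k$, the regime in which the lemma is applied anyway.
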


\begin{proof}
Let $\ccd =\ccd (d)$ and $\ccn =\ccn (\varepsilon)$ be the hypergraphs with vertex set~$[t]$ and
\begin{itemize}
\item $E(\ccd)$ consists of all sets $\{i_1,\dots,i_k\}$ such that $d(V_{i_1},\dots,V_{i_k})\geq d$,
\item $E(\ccn)$ consists of all sets $\{i_1,\dots,i_k\}$ such that $(V_{i_1},\dots,V_{i_k})$ is not $\varepsilon$-regular.
\end{itemize}
Note that the reduced hypergraph $\ccr(\varepsilon,d)$ is the hypergraph with vertex set~$[t]$ and edge set $E(\ccd)\setminus E(\ccn)$.
For an arbitrary $K=\{i_1,\dots ,i_{k-2}\}\in \binom{[t]}{k-2}$ we will show that
\begin{equation}
    \label{eq:degree_cluster_D}
    \deg_{\ccd}(K)\geq \left(c-2d\right)\binom{t}{2}.
\end{equation}
Let $n/t\geq |V_{i_j}| = m \geq (1-\varepsilon)n/t$ be the size of the partition classes and let~$x$ be the number of edges in~$\cch$ that intersect each $V_{i_j}$ in exactly one vertex for each $j\in [k-2]$.
By the condition on $\delta_{k-2}(\cch)$ and $t\geq t_0\geq 2k/d$, we obtain
\begin{equation*}
    x\geq m^{k-2} \left(c\binom{n}{2}-(k-2)mn \right) \geq \left(c-d\right)m^{k-2}\binom{n}{2}.
\end{equation*}
If~\eqref{eq:degree_cluster_D} did not hold, then we would find for~$x$ the upper bound
\begin{equation*}
    x< \left(c- 2d\right)\binom{t}{2} m^k+ \binom{t}{2} d m^k \leq \left(c-d\right) m^{k-2} \binom{n}{2}
\end{equation*}
contradicting the lower bound for~$x$.

Next we observe that at most $\sqrt{\varepsilon}\binom{t}{{k-2}}$ many $(k-2)$-sets~$K$ satisfy $\deg_{\ccn}(K)\leq \sqrt{\varepsilon}\binom{t}{2}$ since the number of non-$\varepsilon$-regular $k$-tuples in $\ccr$ is at most $\varepsilon\binom{t}{k}$.
Consequently, it follows from the degree conditions on~$\ccd$ and~$\ccn$ that all but at most $\sqrt{\varepsilon}\binom{t}{{k-2}}$ many $(k-2)$-sets~$K$ satisfy
\begin{equation*}
    \deg_{\ccr}(K)\geq \left(c-2d-\sqrt{\varepsilon}\right)\binom{t}{2}.
\end{equation*}
\end{proof}

We will find a suitable fractional $\ccc_{\ell}$-tiling in the reduced hypergraph~$\ccr$, where
the cherry~$\ccc_\ell$ is the $k$-uniform hypergraph with vertex set $[2k-2\ell]$ and edges $\{1,\dots,k\}$ and $\{k-2\ell+1,\dots,2k-2\ell\}$.

\begin{definition}
Let $\ccc$ and $\ccr$ be $k$-uniform hypergraphs, $\beta > 0$, and let $\Phi$ be a multiset of hypergraph homomorphisms from $\ccc$ to $\ccr$.
A function $h\colon \Phi \to \{a\beta \colon a \in \bbn_{>0}\}$ is called a $\beta$-$\homit(\ccc)$-tiling if the weight $w_h(v)$ of a vertex $v$ satisfies
\[
    w_h(v) = \sum_{u \in V(\ccc)}\sum_{\phi \in \Phi: v = \phi(u)} h(\phi) \le 1
\]
for all $v \in V(\ccr)$.
We call
\begin{equation*}
    w(h) = \sum_{v \in V(\ccr)}w_h(v) = \sum_{\phi \in \Phi} h(\phi) |V(\ccc)|
\end{equation*}
the weight of the tiling.
\end{definition}

The following building block allows us to easily define a tiling on a single edge.
\begin{fact}
\label{fact:building-block}
    Given an edge $e = \{v_1,\dots,v_k\}$, there exists a $\frac{1}{2(k-\ell-1)}$-$\homit(\ccc_\ell)$-tiling $h$ that is non-zero only on $e$, such that $w_h(v_i) = 1$ for $i \in [k-2]$ and ${w_h(v_{k-1}) = w_h(v_k) = \frac{k-2}{2(k-\ell-1)}}$.
    Note that we may scale the weight of $h$ by any~$q\in(0,1]$ and obtain a $\frac{q}{2(k-\ell-1)}$-$\homit(\ccc_\ell)$-tiling with ${w_h(v_i) = q}$ for $i \in [k-2]$ and $w_h(v_{k-1}) = w_h(v_k) = \frac{q(k-2)}{2(k-\ell-1)}$.
    Similarly, for any $q \in (0,1]$ there exists a $\frac{q}{2(k-\ell)}$-$\homit(\ccc_\ell)$-tiling with ${w_h(v_i) = q}$ for $i \in [k]$.
\end{fact}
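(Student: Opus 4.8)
The plan is to build the required tiling by hand from a ``rotationally regular'' family of copies of $\ccc_\ell$ placed inside the single edge~$e$; the only structural input is an analysis of how such copies cover~$e$. Let $\ccr$ be any hypergraph having $e$ as an edge and let $e_1,e_2$ be the two edges of $\ccc_\ell$, so $|e_1\cap e_2|=2\ell$. If $\phi\colon\ccc_\ell\to\ccr$ is a homomorphism with $\phi(V(\ccc_\ell))\subseteq e$, then $\phi(e_1)$ and $\phi(e_2)$, being edges contained in the $k$-set~$e$, must both equal~$e$, so $\phi|_{e_1}$ and $\phi|_{e_2}$ are bijections onto~$e$; writing $S_\phi=\phi(e_1\cap e_2)$, a $2\ell$-subset of~$e$, this gives $\phi(e_1\setminus e_2)=\phi(e_2\setminus e_1)=e\setminus S_\phi$, so $\phi$ sends exactly one vertex of $\ccc_\ell$ to each vertex of $S_\phi$ and exactly two to each vertex of $e\setminus S_\phi$. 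Consequently, for a multiset $\Phi$ of such homomorphisms with weights~$h$ we obtain $w_h(v)=\sum_{\phi\in\Phi,\ v\in S_\phi}h(\phi)+2\sum_{\phi\in\Phi,\ v\in e\setminus S_\phi}h(\phi)$ for $v\in e$ and $w_h(v)=0$ otherwise; in particular the concrete bijections used inside~$e$ play no role, so such a $\phi$ is for our purposes merely a choice of $2\ell$-subset $S_\phi$ of $e$, and conversely every $2\ell$-subset $S$ of $e$ is $S_\phi$ for some homomorphism~$\phi$ (choose the bijections $e_1\cap e_2\to S$ and $e_1\setminus e_2,\,e_2\setminus e_1\to e\setminus S$ freely).

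For the first statement set $\beta=\tfrac1{2(k-\ell-1)}$ and identify $[k-2]$ with $\mathbb Z_{k-2}$. Because $1\le\ell<k/2$ we have $0\le 2\ell-2\le k-2$, so each cyclic interval $B_j=\{j,j+1,\dots,j+2\ell-3\}\subseteq[k-2]$, for $j\in\mathbb Z_{k-2}$, has $|B_j|=2\ell-2$, and every index $i\in[k-2]$ lies in exactly $2\ell-2$ of the intervals $B_0,\dots,B_{k-3}$. Pick for each $j$ a homomorphism $\phi_j$ with $\phi_j(V(\ccc_\ell))=e$ and $S_{\phi_j}=\{v_{k-1},v_k\}\cup\{v_i\colon i\in B_j\}$, a set of size $2+(2\ell-2)=2\ell$, and let $\Phi=\{\phi_0,\dots,\phi_{k-3}\}$ with constant weight $h(\phi_j)=\beta$, so $h$ takes values in $\{a\beta\colon a\in\bbn_{>0}\}$ with $a=1$. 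Then each of $v_{k-1},v_k$ lies in all $k-2$ sets $S_{\phi_j}$, giving $w_h(v_{k-1})=w_h(v_k)=(k-2)\beta=\tfrac{k-2}{2(k-\ell-1)}$, while each $v_i$ with $i\in[k-2]$ lies in exactly $2\ell-2$ of them and outside the remaining $k-2\ell$, giving $w_h(v_i)=(2\ell-2)\beta+2(k-2\ell)\beta=2(k-\ell-1)\beta=1$. As $2\ell<k$, all these values are at most~$1$, so $h$ is a $\beta$-$\homit(\ccc_\ell)$-tiling realising the claimed vertex weights.

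The two closing remarks follow readily. Multiplying every $h(\phi_j)$ by $q\in(0,1]$ turns the tiling above into a $\tfrac q{2(k-\ell-1)}$-$\homit(\ccc_\ell)$-tiling (still with multiplier $a=1$) all of whose vertex weights get multiplied by~$q$ and hence stay $\le1$. For the final assertion, repeat the argument over $\mathbb Z_k$ rather than $\mathbb Z_{k-2}$: with $I_j\subseteq[k]$ the cyclic interval of length $2\ell$ starting at $j$ --- so each $i\in[k]$ lies in exactly $2\ell$ of the $I_j$, since $2\ell\le k$ --- take homomorphisms $\psi_j$ with $\psi_j(V(\ccc_\ell))=e$, $S_{\psi_j}=\{v_i\colon i\in I_j\}$ and $h(\psi_j)=\tfrac q{2(k-\ell)}$, whence $w_h(v_i)=2\ell\cdot\tfrac q{2(k-\ell)}+2(k-2\ell)\cdot\tfrac q{2(k-\ell)}=2(k-\ell)\cdot\tfrac q{2(k-\ell)}=q$ for every $i\in[k]$. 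The only point requiring care is \emph{integrality} of the tiling: averaging over \emph{all} $2\ell$-subsets of~$e$ (those through $\{v_{k-1},v_k\}$, in the first case) would give only a fractional cover, so one must use a family of subsets in which every vertex appears with the same multiplicity --- the cyclic intervals above being the simplest such choice --- whereupon the whole construction reduces to the elementary weight counts indicated above.
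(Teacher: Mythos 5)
Your proof is correct and follows essentially the same construction as the paper: place copies of $\ccc_\ell$ inside the single edge so that the image of the intersection is $\{v_{k-1},v_k\}$ together with a cyclically shifted block of $2\ell-2$ vertices among $v_1,\dots,v_{k-2}$ (and, for the evenly weighted version, a cyclic block of $2\ell$ vertices shifted through all of $e$), then average with uniform weights. Your write-up merely makes explicit the counting that the paper's terse proof leaves implicit.
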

\begin{proof}
For this consider the homomorphism that maps $\ccc_\ell$ to $e$ such that $v_1,\dots,v_{2\ell-2}, v_{k-1}$ and $v_k$ are the image of the intersection of the two edges of $\ccc_\ell$.
By cyclically shifting the image of the first $2\ell-2$ vertices of the intersection and appropriate scaling,  we obtain all homomorphisms for the required tiling.
We obtain the even weight distribution for the last part of the fact by cyclically shifting the whole image $k$ times.
\end{proof}
The following lemma is the main part of the proof of the Path-Tiling Lemma.
For this we introduce a fractional notion of extremality.
We say that a $k$-uniform hypergraph~$\ccr$ on~$t$ vertices is \emph{$\beta$-fractionally $(\ell,\xi)$-extremal} if there is a
function $b\colon V(\ccr) \to \{0\} \cup [\beta,1]$ with
\[
\sum_{v \in V(\ccr)} b(v) \geq \frac{2(k-\ell)-1}{2(k-\ell)}t
\qqand
    \sum_{e \in E(\ccr)} \prod_{v \in e} b(v) \leq \xi \binom{t}{k}.
\]
Note that the function $b$ can be viewed as a set of weighted vertices, which plays the r\^ole of the vertex set $B$ in the definition of extremality.

\begin{lemma}
\label{lemma:hom-tiling}
For all integers $k \geq 3$ and $1 \leq \ell < k/2$, there exist $C$ and $\gamma_0$ such that for all $\alpha > 0$ and $\gamma\in(0,\gamma_0)$, there exist $\beta>0$ and $\varepsilon>0$ such that the following holds for sufficiently large~$t$.
Let $\ccr$ be a $k$-uniform hypergraph on $t$ vertices that is not $\beta$-fractionally $(\ell,C\gamma)$-extremal and
\begin{equation}
    \label{eq:degcond}
    \deg(K)\geq\left(\frac{4(k-\ell)-1}{4(k-\ell)^2}-\gamma\right)\binom{t}{2}
\end{equation}
holds for all but at most $\varepsilon \binom{t}{k-2}$ sets $K \in \binom{V(\ccr)}{k-2}$.
Then there exists a $\beta$-$\hom(\ccc_\ell)$-tiling $h$ with weight at least $(1-\alpha)t$.
\end{lemma}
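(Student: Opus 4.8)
The plan is to set up a linear program whose feasible solutions correspond precisely to $\beta$-$\hom(\ccc_\ell)$-tilings, take its LP relaxation (dropping the integrality of $h/\beta$ for the moment), and analyze the dual. More concretely, I would first pass to the ``robust'' degree hypergraph: let $\ccr'$ be obtained from $\ccr$ by deleting the at most $\varepsilon\binom{t}{k-2}$ bad $(k-2)$-sets, so that every $(k-2)$-set has codegree at least $(\tfrac{4(k-\ell)-1}{4(k-\ell)^2}-\gamma)\binom{t}{2}$ into edges of $\ccr'$; one checks that losing these edges changes the relevant fractional quantities by at most $\mathrm{poly}(\varepsilon)\binom{t}{k}$, which will be absorbed into the error terms. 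Then I would consider the maximum of $w(h)$ over all \emph{fractional} $\hom(\ccc_\ell)$-tilings (i.e.\ relax the codomain of $h$ to all of $\mathbb{R}_{\ge 0}$, keeping only the weight constraints $w_h(v)\le 1$); this is a finite-dimensional LP. Using Fact~\ref{fact:building-block} as the basic gadget on each edge, I would exhibit an explicit fractional tiling: a natural candidate is to spread weight uniformly, putting weight proportional to $\deg(K)$-type quantities on edges, and argue via a counting/averaging (double counting pairs $(K, e)$ with $K\subset e$) that the total weight $w(h)=\sum_v w_h(v)$ is at least $(1-\alpha')t$ \emph{unless} a positive fraction of vertices is forced to weight $0$, which is exactly the extremal situation.

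The core of the argument is the dichotomy: either the LP optimum is at least $(1-\alpha')t$, or else there is an ``obstruction'' — a set of vertices carrying almost all the deficiency — and this obstruction must witness $\beta$-fractional $(\ell,C\gamma)$-extremality, contradicting the hypothesis. To make this precise I would look at an optimal fractional tiling $h^\star$ and let $B = \{v : w_{h^\star}(v) < 1\}$ be the set of non-saturated vertices. By LP duality / complementary slackness, no edge of $\ccr'$ lies entirely outside the ``used'' region in a way that would let us push more weight onto $B$; quantitatively, the number of edges $e$ with $\sum_{v\in e}(1-w_{h^\star}(v))$ bounded below is small, because any such edge would admit additional tiling weight via Fact~\ref{fact:building-block}, contradicting optimality. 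Setting $b(v) = 1 - w_{h^\star}(v)$ (suitably thresholded at $\beta$ to land in $\{0\}\cup[\beta,1]$, discarding the negligible mass below $\beta$) and using $|B| = \sum_v b(v) + (\text{saturated contribution})$ together with $w(h^\star) = t - \sum_v(1-w_{h^\star}(v))$, a failure of the weight bound $w(h^\star)\ge(1-\alpha)t$ translates into $\sum_v b(v)\ge \alpha t$; choosing constants so that $\alpha$ pushes past the extremal threshold $\frac{2(k-\ell)-1}{2(k-\ell)}t$ — here is where the specific value $\frac{4(k-\ell)-1}{4(k-\ell)^2}$ of the degree condition enters, since $\frac{4(k-\ell)-1}{4(k-\ell)^2} = \frac{1}{k-\ell} - \frac{1}{4(k-\ell)^2} = 1 - \left(\frac{2(k-\ell)-1}{2(k-\ell)}\right)^2\cdot\frac{1}{1}$-type identity governs exactly when a set $B$ of that size can have few edges — and bounding $\sum_{e}\prod_{v\in e}b(v)\le C\gamma\binom{t}{k}$ via the codegree condition (each $(k-2)$-set $K$ has enough codegree that $B$ cannot be nearly edge-free unless $|B|$ is below the threshold), yields the desired fractional extremality, a contradiction.

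Finally, I would round the fractional optimum to an honest $\beta$-$\hom(\ccc_\ell)$-tiling: since the LP has bounded dimension (the number of homomorphism ``types'' depends only on $k$ and $\ell$, not on $t$), an optimal vertex solution has bounded support and rational coordinates with denominators bounded in terms of $k,\ell,t$; rounding each $h(\phi)$ down to the nearest multiple of $\beta$ changes each vertex weight by at most $O(\beta \cdot t)$ globally but, choosing $\beta$ small (after $\alpha,\gamma$ are fixed) and $C$ depending only on $k,\ell$, the total weight drops by at most $\alpha t/2$, so $(1-\alpha)t$ is still achieved. The constants are chosen in the order dictated by the statement: $C=C(k,\ell)$ and $\gamma_0=\gamma_0(k,\ell)$ first (from the extremal-threshold identity and the codegree bound), then given $\alpha$ and $\gamma<\gamma_0$ we pick $\beta$ small enough for the rounding, and then $\varepsilon\ll\beta$ so the bad $(k-2)$-sets are negligible.

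The main obstacle I expect is making the dichotomy quantitatively tight — specifically, verifying that the ``obstruction set'' $B$ arising from an optimal fractional tiling genuinely satisfies \emph{both} extremality conditions simultaneously with the right constants, rather than just one of them. Getting $|B|$ above $\frac{2(k-\ell)-1}{2(k-\ell)}t$ is the easy half (it is forced by the weight deficit); the delicate half is showing $\sum_e\prod_{v\in e}b(v)\le C\gamma\binom{t}{k}$, because $b$ is a \emph{weighted} indicator and the relevant counting has to be done against the robust codegree condition while tracking how the precise coefficient $\frac{4(k-\ell)-1}{4(k-\ell)^2}$ interacts with the cherry structure of $\ccc_\ell$ (each homomorphic image of a cherry uses $2k-2\ell$ vertices with a prescribed overlap pattern, so the ``gain'' per unit of tiling weight is exactly calibrated to this fraction). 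I would expect to spend most of the work isolating the right extremal identity and then a convexity/Lagrangian argument to show that any near-optimal $b$ that fails to be a genuine witness can be perturbed to increase $w(h)$.
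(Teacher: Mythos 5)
The central dichotomy in your plan does not work as stated. If the optimal fractional tiling $h^\star$ has weight below $(1-\alpha)t$, then your witness $b=1-w_{h^\star}$ only satisfies $\sum_v b(v)> \alpha t$, which for small $\alpha$ is nowhere near the required threshold $\frac{2(k-\ell)-1}{2(k-\ell)}t$; since the lemma must hold for every small $\alpha$, ``choosing constants so that $\alpha$ pushes past the extremal threshold'' is impossible, and your remark that getting $\sum_v b(v)$ above the threshold is ``the easy half, forced by the weight deficit'' has the two halves exactly backwards: with your $b$, the bound $\sum_e\prod_{v\in e}b(v)\le C\gamma\binom{t}{k}$ is what follows readily from optimality (an edge all of whose vertices carry substantial slack can receive extra weight via Fact~\ref{fact:building-block}), while the mass condition is simply false in general. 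The missing idea --- and the heart of the paper's proof --- is how to manufacture from a stuck tiling a witness of total mass at least $\frac{2(k-\ell)-1}{2(k-\ell)}t$. The paper runs an iterative local-improvement scheme and, for a tiling admitting no improvement, analyses the bipartite link graphs $L_K(\ccc,\ccc')$ of degree-respecting $(k-2)$-sets $K\in\ccw$ between pairs of cherries of the current tiling: the degree condition makes the average number of edges of these $2(k-\ell)\times 2(k-\ell)$ bipartite graphs about $4(k-\ell)-1$, and K\"onig's theorem together with the unavailability of local improvements forces almost all of them to consist of exactly the edges through one \emph{special} vertex on each side. The witness is then $b(v)=1-a(v)$, where $a(v)$ is the weight of cherries using $v$ as their special vertex; since a cherry has $2(k-\ell)$ vertices but only one special vertex, $\sum_v a(v)\le t/(2(k-\ell))$ holds automatically, which is what produces the threshold --- your construction has no counterpart of this mechanism. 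Non-extremality then supplies a matching of edges whose vertices are rarely special, and weight shifts along extremal pairs free these edges for the building block of Fact~\ref{fact:building-block}, giving the $ct$ improvement and the contradiction.

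Two secondary points. Your rounding step relies on the LP having ``bounded dimension depending only on $k,\ell$''; this is false, as the number of homomorphisms from $\ccc_\ell$ to $\ccr$ is of order $t^{2k-2\ell}$ and there are $t$ vertex constraints, so basic optimal solutions have support and denominators depending on $t$, while $\beta$ must be fixed before $t$. (A rounding of a basic solution to multiples of $\beta$ can be salvaged by a different argument, but the paper sidesteps the issue entirely by constructing the tiling with values that are multiples of $\beta_i$ throughout.) Finally, the identity $\frac{4(k-\ell)-1}{4(k-\ell)^2}=1-\bigl(\frac{2(k-\ell)-1}{2(k-\ell)}\bigr)^2$ you invoke is correct and does explain the extremal example, but in the proof the constant enters through the edge count of the bipartite cherry link graphs, not through an LP-duality computation.
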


\begin{proof}
Clearly, it is sufficient to prove the lemma for small values of $\alpha$.
Consequently the quantification of the lemma allows us to fix the parameters and auxiliary constants~$C'$ and~$c$ to satisfy the following hierarchy of constants
\begin{equation}
    \label{eq:consthier}
    \frac{1}{k}, \frac{1}{\ell} \gg \frac{1}{C'} \gg \frac{1}{C} \gg \gamma_0 \ge \gamma \gg \alpha \gg c,\varepsilon,
\end{equation}
where ``$\gg x$'' denotes that $x$ is chosen sufficiently small with regard to all constants to its left.
Moreover, we fix $\beta$ inductively such that
\[
    1 = \beta_0 \gg \beta_1 \gg \dots \gg \beta_{\lfloor 1/c \rfloor} = \beta
    \qand 16\cdot k! \text{ divides } \tfrac{\beta_{i}}{\beta_{i+1}},
\]
and let $t$ be sufficiently large such that $c$, $\varepsilon$, $\beta\gg 1/t$.
Note that any $\beta_i$-$\hom(\ccc_\ell)$-tiling is also a $\beta$-$\hom(\ccc_\ell)$-tiling as $\beta_i$ is a multiple of $\beta$.
To prove the lemma, we show that given a $\beta_i$-$\hom(\ccc_\ell)$-tiling $h$ with weight $w(h) < (1-\alpha)t$, there exists a $\beta_{i+1}$-$\hom(\ccc_\ell)$-tiling $h'$ with weight $w(h') \ge w(h) + ct$.
We can begin with the trivial $1$-$\hom(\ccc_\ell)$-tiling with weight zero and hence, after at most $1/c$ steps, we obtain a $\beta$-$\hom(\ccc_\ell)$-tiling with weight at least $(1-\alpha)t$.

For the rest of the proof fix a $\beta_i$-$\hom(\ccc_\ell)$-tiling~$h$ with weight $w(h) < (1-\alpha)t$ and assume for a contradiction that there is no $\beta_{i+1}$-$\hom(\ccc_\ell)$-tiling with weight $w(h) + ct$.
It follows from the upper bound on the weight that there are at least $\alpha t/2$ vertices $v \in V(\ccr)$ with $w_h(v) < 1-\alpha/2$ and we may fix a subset $W$ of them of size $\alpha t/2$.

We view $\Phi$, the set of homomorphisms from $\ccc_\ell$ to $\ccr$, as a multiset, where we include~$\phi$ with multiplicity $\frac{h(\phi)}{\beta_i}$, so that we can assume $h\colon \Phi \to \{\beta_i\}$.
For our argument, we will need copies of~$\ccc_\ell$ to cover most of the hypergraph~$\ccr$ apart from~$W$.
So choose~$\Psi$, a multiset of functions from~$\ccc_\ell$ to $V(\ccr) \setminus W$, such that for all vertices~$v \in V(\ccr)$
\[
    \beta_i \cdot \sum_{i \in [2k-2\ell]} \left(|\{ \phi \in \Phi: v = \phi(i) \}| + |\{ \phi \in \Psi: v = \phi(i) \}|\right)
    \le
    1,
\]
and
\begin{equation}
    \label{eq:Phi}
    (1-2\alpha) \frac{t}{\beta_i v(\ccc_\ell)} \le |\Phi \cup \Psi| < (1-\alpha) \frac{t}{\beta_i v(\ccc_\ell)}.
\end{equation}
The right hand side of~\eqref{eq:Phi} holds for empty $\Psi$ and to see that $\Psi$ satisfying the inequality on the left can be chosen, note that one may simply take constant functions that map $\ccc_\ell$ to single vertices $v \in V(\ccr) \setminus \ccw$ for which $w_h(v) < 1 - \alpha$.
Let
\[
    \Xi = \Phi \cup \Psi
\]
and identify a function $\phi$ in $\Xi$ with the -- not necessarily distinct -- vertices $(v_1, \dots, v_{2k-2\ell})$ in its image, where $v_i = \phi(i)$ so that $\{v_1,\dots,v_k\}$ and $\{v_{k-2\ell+1},\dots,v_{2k-2\ell}\}$ form edges in~$\ccr$ if $\phi \in \Phi$.
We refer to the elements of $\Xi$ as \emph{cherries} $\ccc \in \Xi$.

Consider the $(k-2)$-sets in $W$ that satisfy the degree condition~\eqref{eq:degcond} of the lemma.
Since $\varepsilon \ll \alpha$, among those $(k-2)$-sets we find a collection $\ccw$ whose elements are pairwise disjoint and cover at least $|W|/2$ vertices. For later reference we note
\begin{equation}
    \label{eq:ccw}
    |\ccw|\geq\frac{|W|}{2(k-2)}
    >\frac{\alpha t}{4k}.
\end{equation}
For $K \in \ccw$ we consider the \emph{link graph} $L_K$ of $K$ in $\ccr$, which is the ($2$-uniform) graph containing all edges~$e$ such that $K \cup e \in E(\ccr)$.
At most $\frac{t}{\beta_i}\binom{v(\ccc_\ell)}{2} \leq \gamma \binom{t}{2}$ edges have both ends in the same~\mbox{$\ccc \in \Xi$} and at most $\alpha t^2/2 \le \gamma \binom{t}{2}$ edges contain a vertex from $W$, so let~$L'_K$ be the graph obtained from~$L_K$ by removing all these edges.
Combined with the degree condition~\eqref{eq:degcond} we have
\begin{equation}
    \label{eq:eL'K}
    e(L'_K) \ge \left(\frac{4(k-\ell)-1}{4{(k-\ell)}^2}-3\gamma\right) \binom{t}{2}
\end{equation}
for every such $(k-2)$-set~$K\in\ccw$.

We will find pairs $\ccc, \ccc' \in \Xi$ that allow us to locally improve the tiling $h$.
For this we only want to consider edges in the \emph{bipartite induced link graph $L_K(\ccc,\ccc')$}.
Formally the vertex classes of $L_K(\ccc,\ccc')$ are given by two disjoint copies of $[2k-2\ell]$.
In particular, $L_K(\ccc,\ccc')$ has $4k-4\ell$ vertices even when $\ccc$ and $\ccc'$ intersect or when $\ccc$ or $\ccc'$ are not given by injective functions from $\ccc_\ell$.
Moreover, two vertices $i$ and $j$ from different classes are adjacent in $L_K(\ccc,\ccc')$ if $\{v_i,v'_j\}$ is an edge in the link graph $L_K$, where $v_i$ is the image of $i\in V(\ccc_\ell)$ in $\ccc$ and $v'_j$ is the image of $j$ in $\ccc'$.
However, similarly as above we canonically identify the vertices of $L_K(\ccc,\ccc')$ with the vertices of $\ccc$ and $\ccc'$.

We show in the following that for most $K \in \ccw$ the bipartite link graph between most~$\ccc$ and $\ccc'$ has a very specific structure.
We call $(\ccc, \ccc') \in \Xi^2$ an \emph{extremal pair for $K$} if there exist \emph{special vertices} $u \in \ccc$ and $u' \in \ccc'$ such that $L_K(\ccc,\ccc')$ contains exactly all edges incident to these two vertices.
In particular, in such a case $L_K(\ccc,\ccc')$ has $4(k-\ell)-1$ edges.

\begin{claim}
\label{claim:tiling}
    Either there exists a $\beta_{i+1}$-$\hom(\ccc_\ell)$-tiling $h'$ with $w(h') > w(h) + ct$, or for every $\ccc \in \Xi$ there exists $u_\ccc \in \ccc$ such that the following holds.
    For all but at most $\gamma|\ccw|$ sets $K \in \ccw$ all but at most $C'\gamma |\Xi|^2$ pairs $(\ccc,\ccc') \in \Xi^2$ are extremal for~$K$ with special vertices~$u_{\ccc}$ and $u_{\ccc'}$.
\end{claim}
\begin{proof}
The proof of the claim consists of three steps.
First we show that if for a given $(k-2)$-tuple $K\in\ccw$ and some pair of cherries $\ccc,\ccc' \in \Xi$ the induced bipartite link graph~$L_K(\ccc, \ccc')$ contains a matching of size three or two vertices in $\ccc$ each neighbour to two distinct vertices in $\ccc'$, then there is a \emph{local improvement} of the tiling by a weight of at least $\beta_i/4$.
In a second step we shall bound the number of possible local improvements, as otherwise we could combine them to arrive at a desired tiling~$h'$ with a weight increased by~$ct$, which would conclude the proof.
With some foresight, we remark that every cherry~$\ccc \in \Xi$ may be used at most once for a local improvement.
In the last step we utilise this bound on the number of local improvements to show that ``typically''~$L_K(\ccc, \ccc')$ contains only $4(k - \ell) - 1$ edges and displays the structural conditions stated in the claim.

Let $K\in\ccw$ and $(\ccc$, $\ccc')\in\Xi^2$ be such that $w_h(v) \le 1 - \beta_i$ for all vertices $v \in K$.
For the first step we consider two cases.
Suppose that there is a matching with three edges in $L_K(\ccc,\ccc')$.
Recall that $L_K(\ccc,\ccc')$ is a bipartite graph with partition classes of size $2(k-\ell)$.
We set~$h'(\ccc)=(1-\frac{k-2}{6(k-\ell-1)})\beta_i$ if $\ccc \in \Phi$ and similarly $h'(\ccc')=(1-\frac{k-2}{6(k-\ell-1)})\beta_i$ if $\ccc' \in \Phi$.
If one or both of $\ccc, \ccc'$ are in $\Psi$, their vertices were not saturated by the $\beta_{i}$-$\hom(\ccc_\ell)$-tiling~$h$ and otherwise we have reduced their weight by~$\frac{k-2}{6(k-\ell-1)}\beta_i$.
So we can assign weight $\frac{k-2}{6(k-\ell-1)}\beta_i$ to the vertices of the matching edges and weight $\beta_i$ to the vertices of $K$.
This defines a valid $\beta_{i+1}$-$\hom(\ccc_\ell)$-tiling $h'$ by applying Fact~\ref{fact:building-block} (with $q=\frac 13\beta_i$) to the three edges in $\ccr$ corresponding to the matching edges in the link graph.
Note that the weights on the~$6$ vertices of the matching edges remain unchanged, and by considering the vertices on which the weight has changed, it is easy to see that
\[
    w(h') = w(h) + \left( k-2 - (4k -4\ell -6) \cdot\frac{k-2}{6(k-\ell-1)} \right) \beta_i
    \geq w(h) + \frac{1}{3}\beta_i\,,
\]
which yields a local improvement in this case.

For the next case suppose that there are two vertices in $\ccc$ each incident to two edges such that all four neighbours in $\ccc'$ are distinct.
Set $h'(\ccc)=(1-\frac{k-2}{4(k-\ell-1)})\beta_i$ if $\ccc \in \Phi$ and $h'(\ccc')=(1 - \frac{k-2}{8(k-\ell-1)})\beta_i$ if $\ccc' \in \Phi$.
On the vertices of the four edges described above we put weights $\frac{k-2}{8(k-\ell-1)}\beta_i$ and $\beta_i$ on the vertices of $K$.
Again, this defines a tiling $h'$ with
\begin{align}
    w(h') &= w(h) + \left( k-2 -(2k -2\ell -2) \cdot\frac{k-2}{4(k-\ell-1)} - (2k -2\ell -4) \cdot\frac{k-2}{8(k-\ell-1)} \right) \beta_i \nonumber\\
          &\geq w(h) + \frac{k-2}{4}\beta_i \geq w(h) + \frac{1}{4}\beta_i\,.\label{eq:step1case2}
\end{align}
This establishes a local improvement for this case and concludes the discussion of the first step.

For the second step suppose that there is a subset $\ccw'\subset\ccw$ of size at least $\gamma |\ccw|/2$, such that for each $K\in\ccw'$ we can define a local improvement for~$\gamma |\Xi|^2$ cherry pairs.
We apply these local improvements greedily, only using each cherry $\ccc\in \Xi$ at most once (over all $K\in\ccw'$), to increase the weight of the tiling.
This procedure may end, either when every $K\in\ccw'$ contains a saturated vertex, in which case we enlarge the total weight by at least
\[
    \frac{\alpha}{2}\cdot|\ccw'|
    \ge
    \frac{\alpha}{2}\cdot\frac{\gamma}{2}|\ccw|
    \overset{\eqref{eq:ccw}}{\ge}
    \frac{\alpha}{2}\cdot\frac{\gamma\alpha t}{8k},
\]
or when for at least one $K\in\ccw'$ for each of the $\gamma |\Xi|^2$ pairs of cherries at least one cherry was used for some local improvement already.
Any two cherries are contained in at most~$4|\Xi|$ ordered pairs so the latter case would imply that we applied $\gamma|\Xi|/4$ local improvements before.
In summary, we can aggregate local improvements leading to a $\beta_{i+1}$-$\hom(\ccc_\ell)$-tiling~$h''$ with weight at least
\[
    w(h'')\geq
    w(h)+
    \min\Big\{
        \frac{\alpha}{2}\cdot\frac{\gamma\alpha t}{8k}\,,\
        \frac{\beta_i}{4}\cdot\frac{\gamma}{4}|\Xi|
    \Big\}
    \overset{\eqref{eq:consthier},\eqref{eq:Phi}}{>}
    w(h)+ct,
\]
which would conclude the proof of Claim~\ref{claim:tiling}.

Consequently, for the third step we only need to consider those $K\in \ccw$ for which we can define a local improvement for less than $\gamma |\Xi|^2$ of its cherry pairs.
In particular, for those~$K$ most pairs induce no matching of size three in $L_K(\ccc,\ccc')$ and by K\"onig's theorem~\cite{Ko31} $L_K(\ccc,\ccc')$ spans at most $4(k-\ell)$ edges.
If it contains exactly $4(k-\ell)$ edges, the second local improvement considered in the first step would be possible, so indeed these pairs contain at most $4(k-\ell)-1$ edges.
On the other hand, in view of~\eqref{eq:Phi} the degree condition~\eqref{eq:eL'K} of $K \in \ccw$ translates to an average number of edges of at least~$4(k -\ell) - 1 - 4(3\gamma+4\alpha){(k-\ell)}^2$ in the link graphs.
So, as~$C'$ was chosen big enough, all but $(C'-1)\gamma |\Xi|^2$ cherry pairs~$\ccc$,~$\ccc'$ induce exactly $4(k - \ell) - 1$ edges in $L_K(\ccc,\ccc')$.
Since in addition these pairs allow no local improvement as considered in~\eqref{eq:step1case2}, there must be a vertex on each side that has a complete neighbourhood on the other side, so most pairs are indeed extremal.

It remains to show that typically the special vertex $u \in \ccc$ in an extremal pair $L_K(\ccc,\ccc')$ is independent of $K$ and $\ccc'$.
So assume for a moment that there are two vertices $u$ and~$v$ in $\ccc\in\Xi$ such that $u$ is a special vertex for an extremal pair~$L_K(\ccc,\ccc')$ and  $v$ is special for an extremal pair $L_{K'}(\ccc,\ccc'')$ for some (possibly non-distinct) $K$, $K'\in\ccw$ and $\ccc'$, $\ccc''\in\Xi$.
In this case we can define a local improvement by ``splitting'' the case with four edges above.
Indeed choose four edges incident with $u$ in $L_K(\ccc,\ccc')$ and four for~$v$ in~$L_{K'}(\ccc,\ccc'')$.
Assign weights $\frac 12 \beta_i$ to the vertices of $K$ and $K'$, and $\frac{k-2}{16(k-\ell-1)}\beta_i$ to the vertices of the eight chosen edges.
Set $h'(\ccc)=(1-\frac{k-2}{4(k-\ell-1)})\beta_i$ if $\ccc \in \Phi$ and reduce the weights on~$\ccc'$ and $\ccc''$ by $\frac{k-2}{16(k-\ell-1)}\beta_i$ if they are in $\Phi$ (or by $\frac{k-2}{8(k-\ell-1)}\beta_i$ in case $\ccc'=\ccc''$).
Similar calculations as in~\eqref{eq:step1case2} lead to a local improvement of $\beta_i/4$ involving the three cherries $\ccc$, $\ccc'$, and $\ccc''$.

For each cherry $\ccc$ fix $u_\ccc \in \ccc$ as the vertex that occurs most often as a special vertex over all extremal pairs $L_K(\ccc,\ccc')$.
Assume for a moment that at least for $\gamma |\ccw|/2$ many~$K \in \ccw$ for at least~$\gamma|\Xi|^2$ extremal pairs $\ccc$, $\ccc'$ the special vertex in $\ccc$ is not $u_\ccc$.
In particular for each such $K$ we find $\gamma|\Xi|/4$ such pairs none of which share a cherry.
By the choice of~$u_\ccc$ there exist $K'$, $\ccc''$ as above that allow us to define a local improvement as long as we have not applied more than $\gamma|\Xi|/12$ local improvements.
So we can aggregate the local improvements as in the second step.
Otherwise the chosen $u_\ccc$ satisfy the statement of the claim.
\end{proof}

We call $\ccc \in \Xi$ \emph{good} if it is contained in at least $\frac 12|\Xi|$ extremal pairs for at least $\frac 12 |\ccw|$ many $K \in \ccw$ and \emph{bad} otherwise.
As a $\beta_{i+1}$-$\hom(\ccc_\ell)$-tiling $h'$ with $w(h') > w(h) + ct$ would complete the proof of Lemma~\ref{lemma:hom-tiling}, Claim~\ref{claim:tiling} implies that there are at most $(C' + 1)\gamma |\ccw| |\Xi|^2$ triples $(K,\ccc, \ccc') \in \ccw\times\Xi^2$ such that $\ccc, \ccc'$ are not extremal for $K$.
So at most $5C'\gamma |\Xi|$ cherries are bad as we would have
\[
    5C'\gamma |\Xi| \cdot \frac 12 |\ccw| \cdot \frac 12 |\Xi|>(C' + 1)\gamma |\ccw| |\Xi|^2
\]
such triples otherwise.
Moreover, for every vertex $v\in V$ we denote by $\Xibad(v)$ the set of bad cherries $\ccc\in\Xi$ that contain it.

To complete the proof of Lemma~\ref{lemma:hom-tiling} we will show that we find a large matching~$M$ in $\ccr$ such that every vertex $v\in e\in M$ is contained in ``many'' good cherries.
For each good cherry~$\ccc \in \Xi$ there are a lot of choices for~$\ccc'$ and $K\in \ccw$ such that~$\ccc$ and~$\ccc'$ are an extremal pair for~$K$.
We will redistribute the weights to transfer weight from the non-special vertices of~$\ccc$ (and~$\ccc'$) to~$K$, which will reduce the weight on~$v$ (since we will ensure that $v$ is a non-special vertex).
Repeating this for every $v\in e$ will allow us to obtain a local improvement for the tiling by adding weight on $e$ and repeating this for sufficiently many hyperedges $e\in M$ leads to the desired global improvement.

We define the function $a \colon V(\ccr) \to [0,1]$ by $v \mapsto\beta_i \cdot \sum_{\ccc \in \Xi} \mathds{1}_{\{v\}}(u_\ccc)$, which assigns to a vertex the sum of weights of cherries that use it as a special vertex.
As any cherry contains $2(k-\ell)$ vertices, it is clear that $\sum_{v \in V(\ccr)} a(v) \leq \frac{t}{2(k-\ell)}$ and, therefore, we can utilise the $\beta$-fractional non-extremality of~$\ccr$ for $b(\cdot)=1-a(\cdot)$
and obtain
\[
    \sum_{e \in E(\ccr)} \prod_{v \in e} b(v) \ge C\gamma \binom{t}{k}.
\]
Since there are at most  $5C'\gamma |\Xi|$ bad cherries, they contribute at most
\begin{equation}
    \label{eq:badv}
    \beta_i\sum_{v\in V(\ccr)}\big|\Xibad(v)\big|
    \leq
    \beta_i v(\ccc_\ell) \cdot 5 C'\gamma |\Xi|
    \overset{\eqref{eq:Phi}}{\leq}
    5 C' \gamma t
\end{equation}
to the overall weight of the $\beta_{i}$-$\hom(\ccc_\ell)$-tiling $h$.
We shall only use good cherries to redistribute weights for the desired $\beta_{i+1}$-$\hom(\ccc_\ell)$-tiling, so we consider the function $b'\colon V(\ccr)\to[0,1]$ given by
\[
    b'(v)=\max\big\{0,\, b(v)-\beta_i\cdot|\Xibad(v)|\big\}
\]
and in view of~\eqref{eq:badv} and $C'\ll C$ (cf.~\eqref{eq:consthier}) we have
\[
    \sum_{e \in E(\ccr)} \prod_{v \in e} b'(v) \ge \frac{C}{2}\gamma \binom{t}{k}.
\]
We will use an averaging argument to obtain a matching $M$ in $E(\ccr)$ with a suitable lower bound on~$\sum_{e \in M} \prod_{v \in e} b'(v)$.
Consider all maximal matchings (of size $\lfloor t/k \rfloor$) in the complete $k$-uniform hypergraph $\cck_t$ on $t$ vertices.
Any given edge is contained in a ${\lfloor t/k \rfloor}/{\binom{t}{k}}$ fraction of those matchings, and so by averaging there is a matching~$M' \subset E(\cck_t)$ with
\[
    \sum_{e \in M' \cap E(\ccr)} \prod_{v \in e} b'(v) \ge \frac{C}{2}\gamma \cdot \left\lfloor\frac{t}{k}\right\rfloor \ge \frac{C}{3}\gamma \cdot \frac{t}{k}.
\]
Set $M = M' \cap E(\ccr)$.
Since~$b'(v)\in[0, 1]$ we have
\begin{equation}\label{eq:minM}
    \sum_{e \in M} k \cdot \min_{v\in e} \{b'(v)\}\ge \sum_{e \in M} k \prod_{v \in e} b'(v)\geq \frac{C}{3}\gamma t.
\end{equation}
In particular, we may assume that $\min_{v\in e}\{b'(v)\}>0$ for every $e\in M$, since this has no effect on inequality~\eqref{eq:minM}.
Moreover, from the definition of the function $b'(\cdot)$ it then follows that $\min_{v\in e}\{b'(v)\}\geq\beta_i$ for every $e\in M$.

For each vertex $v \in \bigcup M$, we consider good cherries that contain $v$ as a non-special vertex.
Assume that we have $K\in\ccw$ and an extremal pair $\ccc$, $\ccc'$ such that $v$ is a non-special vertex in $\ccc$.
Recall that $L_K(\ccc,\ccc')$ contains all edges incident to the two special vertices.
We define a \emph{local weight shift}:
If $\ccc \in \Psi$, we can increase the weight at the vertex $v$ by $\beta_i$, if $\ccc \in \Phi$ we will shift the weight as follows.
Assign weights $\frac{1}{2(k-\ell)-1} \cdot \frac{k-2}{4(k-\ell-1)}\beta_i$ to the vertices of all edges incident with exactly one of the special vertices, $\beta_i$ to the vertices of~$K$ and set $h'(\ccc)=(1-\frac{k-2}{4(k-\ell-1)})\beta_i$ and $h'(\ccc')=(1-\frac{k-2}{4(k-\ell-1)})\beta_i$ if $\ccc \in \Phi$.
By similar calculations as before, this defines a valid $\beta_{i+1}$-$\hom(\ccc_\ell)$-tiling $h'$ with $w(h')=w(h)$.
On the other hand, the weight of the vertex~$v$ is reduced by $\frac{k-2}{4(k-\ell)-2}\beta_i$, i.e.
\[
    w_{h'}(v)=w_{h}(v) - \frac{k-2}{4(k-\ell)-2}\beta_i\,.
\]
It follows from the definition of~$b'(v)$ that we have at least $b'(v)/\beta_i$ many good cherries that contain~$v$ as a non-special vertex and we shall apply at most $\min_{u\in e}\{b'(u)\}/\beta_i$ local weight shifts for a vertex $v\in e\in M$.

For every edge $e\in M$ we would like to apply these local weight shifts for every vertex $v\in e$, where we cycle through all $k$ vertices and apply one shift at a time.
In other words, we evenly reduce the weights on the vertices of $e$.
Note that we can apply these local weight shifts using $K, \ccc$, and $\ccc'$ unless we have saturated the vertices in $K$ or used one of the cherries before.
The procedure stops as soon as we reach a vertex for which no local weight shift is possible.

We first discuss the ideal case that this procedure does not stop, i.e.\ for every $e\in M$ and every $v\in e$ we applied $\min_{u\in e}\{b'(u)\}/\beta_i$ local weight shifts.
In this case, for every $e\in M$ we reduced the weight of all vertices $v\in e$ by at least
\[
    \frac{1}{\beta_i}\min_{u\in e}\{b'(u)\}\cdot \frac{k-2}{4(k-\ell)-2}\beta_i
    =
    \frac{k-2}{4(k-\ell)-2}\min_{u\in e}\{b'(u)\}.
\]
Consequently, we may appeal to Fact~\ref{fact:building-block} to increase the tiling on the edge $e$ by the same amount.
Repeating this for all $e \in M$, we obtain a $\beta_{i+1}$-$\hom(\ccc_\ell)$-tiling $h''$ satisfying
\[
    w(h'') \geq
    w(h) + \sum_{e\in M} k \cdot \frac{k-2}{4(k-\ell)-2}\min_{u\in e}\{b'(u)\}
    \overset{\eqref{eq:minM}}{\geq}
    w(h) + \frac{C\gamma t}{3} \cdot
    \frac{k-2}{4(k-\ell)-2}
    \overset{\eqref{eq:consthier}}{\geq}
    w(h) + ct,
\]
which would conclude the proof of Lemma~\ref{lemma:hom-tiling} in this case.

In the case that the procedure stops, there is some $v\in V(M)$ and a good cherry~$\ccc$ for~$v$ such that~$\ccc$ cannot be used for a local weight shift for~$v$.
This means, since~$\ccc$ is a good cherry, that either $\frac{1}{2}|\ccw|$ many~$K\in \ccw$ contain a saturated vertex or that at least $\frac{1}{2}|\Xi|$~cherries were used in local weight shifts before.
In the case that $\frac{1}{2}|\ccw|$ many~$K\in \ccw$ contain a saturated vertex, each of these vertices was used in at least $\frac{\alpha}{2\beta_i}$ local weight shifts, so in total we have applied
\[
    \frac{1}{2}|\ccw| \cdot \frac{\alpha}{2\beta_i}
    \overset{\eqref{eq:ccw}}{\geq}
    \frac{\alpha t}{8k} \cdot \frac{\alpha}{2\beta_i}
\]
local weight shifts.
If on the other hand all $\frac 12 |\Xi|$ possible cherries $\ccc'$ were used in local weight shifts before, then we have applied at least $\frac 14 |\Xi|$ local weight shifts.
As in the ideal case, using Fact~\ref{fact:building-block}, we conclude that we can increase the tiling on the edges in $M$ and obtain a $\beta_{i+1}$-$\hom(\ccc_\ell)$-tiling $h''$ with
\[
    w(h'')
    \geq
    w(h) + \Big(\min\Big\{
        \frac{\alpha^2 t}{16k\beta_i},\,
        \frac{|\Xi|}{4}
    \Big\}- k\Big)
        \cdot \frac{(k-2)\beta_i}{4(k-\ell)-2}
    \overset{\eqref{eq:consthier},\eqref{eq:Phi}}{\geq}
    w(h) + ct,
\]
which concludes the proof of Lemma~\ref{lemma:hom-tiling}.
\end{proof}

Next we want to transfer the $\beta$-$\hom(\ccc_\ell)$-tiling of~$\ccr$ into a path-tiling of~$\cch$.
For that purpose we will use the following lemma from~\cite{HaZh15}*{Lemma 2.7}.

\begin{lemma}
\label{lemma:partition}
Fix $k \geq 3$, $1 \leq \ell < k/2$ and $\varepsilon,\ d > 0$ such that $d > 2\varepsilon$. Let $m > \frac{k^2}{\varepsilon^2(d - \varepsilon)}$. Suppose $\ccv = (V_1, \ldots, V_k)$ is an $(\varepsilon, d )$-regular $k$-tuple with
$$
|V_1| = \cdots = |V_{2\ell}| = m \mbox{ and }|V_{2\ell +1 }| = \cdots = |V_{k}| = 2m.
$$
Then there are at most $\frac{2k}{(d - \varepsilon)\varepsilon}$ vertex disjoint $\ell$-paths that together cover all but at most $2k\varepsilon m$ vertices of $\ccv$.\qed
\end{lemma}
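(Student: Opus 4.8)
The plan is to embed one ``canonical'' type of $\ell$-path into $\ccv$ greedily, always with fresh vertices, and to show that the construction only runs out of room once essentially every part has been exhausted. Concretely, fix how an $\ell$-path should sit inside $\ccv$: each edge of the path will be a transversal of $(V_1,\dots,V_k)$ (meeting every $V_i$ in exactly one vertex), the $\ell$ vertices shared by two consecutive edges will alternate between lying in $V_1,\dots,V_\ell$ and in $V_{\ell+1},\dots,V_{2\ell}$, and the $k-2\ell$ remaining (``private'') vertices of every edge will lie in the large parts $V_{2\ell+1},\dots,V_k$. Since $\ell<k/2$ the three blocks of parts are pairwise disjoint, so the scheme is consistent, and one checks directly that it produces genuine $\ell$-paths (consecutive edges meet in exactly $\ell$ vertices, non-consecutive ones in none). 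The decisive feature is that the scheme is \emph{balanced}: an $\ell$-path with $b$ edges uses $b$ vertices from each large part and about $b/2$ from each small part, i.e.\ the same $\big(1+o(1)\big)\tfrac{b}{2m}$ fraction of every part, the small parts having size $m$ and the large ones $2m$.

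Second, I would build such a path greedily, one edge at a time, keeping track of the sets $U_i\subseteq V_i$ of still-unused vertices and starting a new $\ell$-path whenever the current one cannot be continued. The heart of the matter is the extension step: if the current end-overlap is a transversal $W$ of, say, $V_1,\dots,V_\ell$, one must find a transversal edge $e'\supseteq W$ whose other $k-\ell$ vertices are unused and whose new end-overlap is again ``typical'' enough to be continued. Here $(\varepsilon,d)$-regularity is used: taking $A_i=V_i$ gives at least $(d-\varepsilon)\prod_i|V_i|$ transversal edges, and a regularity/averaging argument then shows that, as long as every part still has at least a fixed constant times $\varepsilon|V_i|$ unused vertices, some valid continuation exists with a typical successor overlap --- only a vanishing fraction of candidate continuations are spoiled by the already-used vertices or lead to an atypical successor, the hypothesis $m>k^2/(\varepsilon^2(d-\varepsilon))$ keeping these error terms below the density $(d-\varepsilon)$ supplied by regularity, and $d>2\varepsilon$ ensuring that density is positive. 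The same count lets one start a new path, with a continuable first edge, as long as all the $U_i$ are still large.

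Third, I would combine these. By the balance noted above the used fractions $|V_i\setminus U_i|/|V_i|$ stay within $o(1)$ of one another throughout, so the first moment at which some $U_i$ is too small to continue is also a moment at which \emph{every} $U_i$ is small; hence the uncovered set has size at most $\sum_i(\mathrm{const}\cdot\varepsilon|V_i|)$, which the calculation brings below $2k\varepsilon m$. For the number of paths: a path is abandoned only when it cannot be continued, which by the extension step means some part has already dropped below the threshold, so charging the $O(\varepsilon|V_i|)$ of slack that each abandonment can waste against the total budget $\sum_i|V_i|=2m(k-\ell)$ yields the bound $2k/\big((d-\varepsilon)\varepsilon\big)$ on the number of $\ell$-paths.

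The step I expect to be the real obstacle is the extension step. Weak regularity controls the edge density of $k$-tuples of vertex sets, but says nothing directly about the link of a fixed $\ell$-set, so making rigorous the claim ``a typical overlap can be extended, within the unused vertices, to another typical overlap, and the atypical overlaps occur rarely enough to force only boundedly many fresh starts and a negligible leftover'' is exactly the technical core, and it is where essentially all of the quantitative hypotheses are spent.
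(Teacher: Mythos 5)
Your sketch has the right skeleton, but it is not a proof, and the step you yourself flag as ``the real obstacle'' is precisely the content of the lemma. (For context: the paper does not prove this statement at all -- it is imported verbatim from Han and Zhao [HaZh15, Lemma~2.7], which is why it carries a \emph{qed} with no argument -- so the only question is whether your outline could be completed, not whether it matches an internal proof.) The embedding scheme (transversal edges, overlaps alternating between $V_1,\dots,V_\ell$ and $V_{\ell+1},\dots,V_{2\ell}$, private vertices in the large parts), the balancedness observation, and the stopping rule at an $\varepsilon$-fraction per part are all standard and correct.

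The genuine gap is the extension step, and the justification you propose for it cannot work as stated once $\ell\ge 2$. Weak regularity does give single-vertex degree control into a \emph{fixed} family of large sets: if at least $\varepsilon|V_i|$ vertices had atypically small degree into a fixed box of large subsets, those vertices would themselves form an admissible set $A_i$ violating $(\varepsilon,d)$-regularity. But the ``atypical successor overlaps'' in your argument are transversal $\ell$-sets, i.e.\ elements of a product $U_{\ell+1}\times\cdots\times U_{2\ell}$; a large collection of bad $\ell$-sets need not contain (or even correlate with) any sub-box $A_{\ell+1}\times\cdots\times A_{2\ell}$ with $|A_i|\ge\varepsilon|V_i|$, so regularity gives no upper bound on how many overlaps are bad, and indeed a fixed $\ell$-set may have empty link inside the unused boxes. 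Hence the claim that ``only a vanishing fraction of candidate continuations lead to an atypical successor'' has no proof from the stated hypotheses, and the bound on the number of paths collapses with it, since it rests on the assertion that a path is abandoned only when some part has been depleted. (For $\ell=1$ the single-vertex degree argument does rescue the plan, which is essentially the $3$-uniform case; but the lemma is needed for all $1\le\ell<k/2$, and $\ell\ge2$ is exactly the new situation here.) Acknowledging the obstacle, as you do in your final paragraph, is not the same as overcoming it: the counting has to be organised so that one never needs the link of a single prescribed $\ell$-set, and that reorganisation -- carried out in Han and Zhao's proof of their Lemma~2.7 -- is the missing idea.
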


Finally, by using Lemma~\ref{lemma:partition} on the edges of the $\beta$-hom$(\ccc_\ell)$-tiling of~$\ccr$ given by Lemma~\ref{lemma:hom-tiling}, we obtain a path-tiling from~$\cch$ of the desired size.

\begin{lemma}[Path-Tiling Lemma]
\label{lemma:tiling}
For all integers $k\geq 3$ and $1\leq\ell<k/2$, there exist $C,\gamma_0>0$ such that for all $\alpha>0$, $\gamma\leq \gamma_0$ there exists an integer $s$ such that the following holds for all sufficiently large~$n$.
Let~$\cch$ be a $k$-uniform hypergraph on $n$ vertices and
\begin{equation*}
\delta_{k-2}(\cch)\geq\left(\frac{4(k-\ell)-1}{4(k-\ell)^2}-\gamma\right)\binom{n}{2}.
\end{equation*}
Then either there is a family of at most $s$ disjoint $\ell$-paths that cover all but at most $\alpha n$ vertices of $\cch$ or $\cch$ is $(\ell, C\gamma)$-extremal.
\end{lemma}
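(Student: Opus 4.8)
The plan is to run the standard regularity-plus-absorbing scheme. First I would apply the weak regularity lemma (Lemma~\ref{lemma:regularity_lemma}) to~$\cch$, transfer the degree condition to the reduced hypergraph~$\ccr$ via Lemma~\ref{lemma:degree_clustergraph}, feed~$\ccr$ into Lemma~\ref{lemma:hom-tiling}, and in the non-extremal case blow up the resulting fractional $\hom(\ccc_\ell)$-tiling of~$\ccr$ into a path-tiling of~$\cch$ using Lemma~\ref{lemma:partition}. For the constants: let $C_\ast$ and $\gamma_\ast$ be supplied by Lemma~\ref{lemma:hom-tiling} for $k,\ell$, and put $C:=3C_\ast$ and $\gamma_0:=\gamma_\ast/2$. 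Given $\alpha>0$ and $\gamma\le\gamma_0$, I would set $\alpha':=\alpha/2$ and $\gamma':=\tfrac{3}{2}\gamma<\gamma_\ast$, obtain $\beta,\varepsilon_{\mathrm{h}}>0$ from Lemma~\ref{lemma:hom-tiling} applied with $\alpha',\gamma'$, then fix a regularity density $d\le\gamma/8$ and a regularity parameter $\varepsilon>0$ so small that $\sqrt{\varepsilon}\le\min\{\varepsilon_{\mathrm{h}},\gamma/8\}$, $2\varepsilon<d$ and $\varepsilon/\beta$ is tiny, and an integer $t_0\ge 2k/d$ large enough for Lemmas~\ref{lemma:degree_clustergraph}, \ref{lemma:hom-tiling} and~\ref{lemma:partition}; Lemma~\ref{lemma:regularity_lemma} then returns $T_0$, the integer~$s$ is a suitable function of $T_0,\beta,d,\varepsilon$, and~$n$ is taken large.

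Having done this, Lemma~\ref{lemma:regularity_lemma} gives a partition $V_0\dcup V_1\dcup\dots\dcup V_t$ with $|V_1|=\dots=|V_t|=:m$ and reduced hypergraph $\ccr=\ccr(\varepsilon,d)$ on~$[t]$, and Lemma~\ref{lemma:degree_clustergraph} (with $c=\tfrac{4(k-\ell)-1}{4(k-\ell)^2}-\gamma$) shows that all but at most $\sqrt{\varepsilon}\binom{t}{k-2}\le\varepsilon_{\mathrm{h}}\binom{t}{k-2}$ of the $(k-2)$-subsets~$K$ of~$[t]$ satisfy $\deg_{\ccr}(K)\ge(c-2d-\sqrt{\varepsilon})\binom{t}{2}\ge\bigl(\tfrac{4(k-\ell)-1}{4(k-\ell)^2}-\gamma'\bigr)\binom{t}{2}$, which is hypothesis~\eqref{eq:degcond} of Lemma~\ref{lemma:hom-tiling}. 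If~$\ccr$ is $\beta$-fractionally $(\ell,C_\ast\gamma')$-extremal, I would derive that~$\cch$ is $(\ell,C\gamma)$-extremal: take a witnessing weight function~$b$ -- which, after shrinking~$b$ towards~$0$ (legal within $\{0\}\cup[\beta,1]$ since $\beta$ is tiny, and only decreasing $\sum_{e\in E(\ccr)}\prod_{v\in e}b(v)$), we may assume has $\sum_{v}b(v)\le\tfrac{2(k-\ell)-1}{2(k-\ell)}t+1$ -- let $B_i\subseteq V_i$ have size $\lfloor b(i)m\rfloor$, and adjust $\bigcup_i B_i$ by $O(\varepsilon n+n/t_0+T_0)$ vertices to a set~$B$ with $|B|=\lfloor\tfrac{2(k-\ell)-1}{2(k-\ell)}n\rfloor$. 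Counting~$e(B)$ over the $k$-tuples of clusters -- edges of~$\ccr$ contribute $\le m^k\sum_{e\in E(\ccr)}\prod_{v\in e}b(v)\le(1+o(1))\tfrac{3}{2}C_\ast\gamma\binom{n}{k}$, each $\varepsilon$-regular non-edge contributes $\le(d+2\varepsilon)m^k$, there are only $\varepsilon\binom{t}{k}$ non-regular tuples, and intra-cluster edges together with edges meeting~$V_0$ or the adjusted vertices contribute $O((\varepsilon+1/t_0)k\binom{n}{k})+o(\binom{n}{k})$ -- gives $e(B)\le\bigl(\tfrac{3}{2}C_\ast\gamma+d+3\varepsilon+o(1)\bigr)\binom{n}{k}\le C\gamma\binom{n}{k}$, as required.

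In the complementary case Lemma~\ref{lemma:hom-tiling} supplies a $\beta$-$\hom(\ccc_\ell)$-tiling~$h$ of~$\ccr$ with weight $w(h)\ge(1-\alpha')t$. Splitting each homomorphism into $h(\phi)/\beta$ copies of weight~$\beta$, I would regard~$h$ as a multiset~$\Phi$ of homomorphisms $\ccc_\ell\to\ccr$ with $\beta\cdot\#\{(\phi,u)\in\Phi\times V(\ccc_\ell):\phi(u)=i\}\le 1$ for each $i\in[t]$ and $|\Phi|=w(h)/(\beta(2k-2\ell))\le t/(\beta(2k-2\ell))$. Set $m_\phi:=\lfloor\beta m/2\rfloor$, and greedily and disjointly over all of~$\Phi$ carve sub-clusters: for each $\phi\in\Phi$, each of the two edges~$e$ of~$\ccc_\ell$ -- whose $\phi$-image is an edge of~$\ccr$, hence an $(\varepsilon,d')$-regular $k$-tuple for some $d'\ge d$ -- and each vertex~$u\in e$, carve out of~$V_{\phi(u)}$ a fresh set of size~$m_\phi$ if~$u$ lies in the $2\ell$-element overlap of the two edges of~$\ccc_\ell$, and of size~$2m_\phi$ otherwise. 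This packs because each~$V_i$ loses exactly $2m_\phi\cdot\#\{(\phi,u):\phi(u)=i\}\le\beta m\cdot(1/\beta)=m$ vertices: an overlap vertex of~$\ccc_\ell$ lies in both edges and contributes an $m_\phi$-set to each, a non-overlap vertex lies in one edge and contributes a $2m_\phi$-set. For fixed~$\phi$ and~$e$, reordering the carved sub-$k$-tuple so the $2\ell$ overlap coordinates (of size~$m_\phi$) come first, it is -- being built from subsets of relative size $\ge\beta/2$ -- $(2\varepsilon/\beta,d')$-regular, so since $\varepsilon/\beta$ is tiny, $d>2\varepsilon$ and~$m_\phi$ is large, Lemma~\ref{lemma:partition} yields at most $2k\beta/(d\varepsilon)$ vertex-disjoint $\ell$-paths covering all but $\le 4k\varepsilon m_\phi/\beta$ of the carved vertices.

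Taking all these $\ell$-paths over all $\phi\in\Phi$ and each of the two edges of~$\ccc_\ell$ produces a family of at most $s:=2|\Phi|\cdot 2k\beta/(d\varepsilon)\le 2kT_0/((k-\ell)d\varepsilon)$ vertex-disjoint $\ell$-paths in~$\cch$, a constant. The uncovered vertices are: those in~$V_0$; the vertices in no carved sub-cluster, at most $(\alpha'+2\varepsilon)n$ of them for~$n$ large, since $w(h)\ge(1-\alpha')t$, $|V_0|\le\varepsilon n$ and the carved vertices number at least $m\,w(h)-O(|\Phi|k)$; the $\sum_{\phi}O(k\varepsilon m_\phi/\beta)=O((\varepsilon/\beta)n)$ vertices discarded by the applications of Lemma~\ref{lemma:partition}; and the $O(|\Phi|k)=O(T_0/\beta)=o(n)$ vertices lost to the floor in~$m_\phi$. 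With $\alpha'=\alpha/2$, then~$\varepsilon$ small and~$n$ large, this sum is at most~$\alpha n$, completing the proof. The hard part will be this blow-up step: I must choose the sizes~$m_\phi$ and~$2m_\phi$ so that over the whole fractional tiling the carved sub-clusters pack disjointly into the clusters, with the $2\ell$ clusters shared by the two edges of each cherry filled to capacity by the two halves of Lemma~\ref{lemma:partition} while every other cluster is filled to the same level by a single half -- this is exactly why~$\ccc_\ell$ is taken to be two edges overlapping in~$2\ell$ (rather than~$\ell$) vertices -- and I must carry out the capacity bookkeeping carefully, also for non-injective~$\phi$ (a cluster may receive contributions from two distinct vertices of~$\ccc_\ell$, or from both of its edges), and verify that every carved sub-$k$-tuple stays regular enough to invoke Lemma~\ref{lemma:partition}.
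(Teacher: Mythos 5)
Your proposal is correct and follows essentially the same route as the paper's proof: weak regularity, degree transfer to the reduced hypergraph via Lemma~\ref{lemma:degree_clustergraph}, Lemma~\ref{lemma:hom-tiling} in the non-extremal case with the fractional $\homit(\ccc_\ell)$-tiling blown up into $\ell$-paths via Lemma~\ref{lemma:partition}, and a direct transfer of fractional extremality of~$\ccr$ to $(\ell,C\gamma)$-extremality of~$\cch$ otherwise. The only differences are cosmetic (atomizing the tiling into weight-$\beta$ copies and carving sets of sizes $m_\phi$/$2m_\phi$ per cherry, rather than carving one set of size $2\lfloor h(\phi)m/2\rfloor$ per cluster and splitting the $2\ell$ overlap classes in half, plus slightly different constant bookkeeping), and your packing and regularity-inheritance arguments match the paper's.
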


\begin{proof}

Let $k \geq 3$ and $1 \leq \ell < k/2$ be given.
Let~$C'$ and~$\gamma_0'$ be the constants given by Lemma~\ref{lemma:hom-tiling} for $k$ and $\ell$.
Set $C = 6C'$ and $\gamma_0 = \frac{\gamma_0'}{4}$, and let $\alpha > 0$ and $\gamma \le \gamma_0$.
Following the quantification of Lemma~\ref{lemma:hom-tiling} with~$\frac{\alpha}{2}$ and~$\gamma$ we obtain $\beta$ and $\varepsilon'$ and a sufficiently large~$t_0$.
Let $\varepsilon$ be sufficiently small.
Then the weak regularity lemma (Lemma~\ref{lemma:regularity_lemma}) for $\varepsilon_0 = \frac{\beta\varepsilon}{2}<\gamma^2$ and~$t_0$ yields $T_0$.
Let $s$ be a sufficiently large constant.
Let $\cch$ be a $k$-uniform hypergraph on $n$ vertices such that
\[
    \delta_{k-2}(\cch)\geq\left(\frac{4(k-\ell)-1}{4{(k-\ell)}^2}-\gamma\right)\binom{n}{2}
\]
and $n$ is sufficiently large.
By the weak regularity lemma there exists an $\varepsilon_0$-regular partition $V_0 \dcup \dots \dcup V_t$ of~$\cch$ with $|V_1|=\dots =|V_t|=m$, $|V_0| \le \varepsilon_0n$ and $t_0\leq t\leq T_0$ and the corresponding reduced hypergraph $\ccr=\ccr(\varepsilon_0, \gamma)$ on~$t$ vertices satisfies, by
Lemma~\ref{lemma:degree_clustergraph},
\[
    \deg_\ccr(K)\geq\left(\frac{4(k-\ell)-1}{4{(k-\ell)}^2}-4\gamma\right)\binom{t}{2}
\]
for all but at most $\sqrt{\varepsilon_0} \binom{t}{k-2} \le \varepsilon' \binom{t}{k-2}$ many $(k-2)$-sets $K\in \binom{[t]}{k-2}$.
We split the remainder of the proof in two cases, depending on whether $\ccr$ is $\beta$-fractionally $(\ell,4C'\gamma)$-extremal.

Suppose that $\ccr$ is not $\beta$-fractionally $(\ell,4C'\gamma)$-extremal, so in particular it is not $\beta$-fractionally $(\ell,C'\gamma)$-extremal.
Then Lemma~\ref{lemma:hom-tiling} implies that there exists a $\beta$-hom$(\ccc_\ell)$-tiling $h$ of $\ccr$ with weight $(1-\frac{\alpha}{2})t$.
Let $\Phi^+$ be the set of homomorphisms~$\phi$ from $\ccc_\ell$ to $\ccr$ with $h(\phi)>0$, which implies in fact $h(\phi)\geq \beta$.
We will use Lemma~\ref{lemma:partition} to obtain $\ell$-paths covering almost all vertices of~$\cch$ and for this we split the partition classes according to the tiling $h$: let ${\{R_1^\phi, \ldots, R_{2k - 2\ell}^\phi\}}_{\phi \in \Phi^+}$ be a family such that for all $\phi \neq \phi' \in \Phi^+$
\begin{itemize}
    \item $R_i^{\phi} \subset V_{\phi(i)}$ for all $i \in  [2k - 2\ell]$,
    \item $R_i^{\phi} \cap R_j^{\phi'} = \emptyset$ for all $i, j \in [2k - 2\ell]$,
    \item $|R_i^{\phi}| =2 \lfloor \frac{h(\phi)m}{2}\rfloor$ for all $i \in  [2k - 2\ell]$.
\end{itemize}
For each $\phi \in \Phi^+$ and all $i \in \{k - 2\ell + 1, \ldots, k\}$ let $S_i^\phi \cup U_i^\phi = R_i^\phi$ be a partition of $R_i^\phi$ into two classes of equal size.
Note that, since $(V_{\phi(1)}^\phi, \ldots, V_{\phi(k)}^\phi)$ and $(V_{\phi(k-2\ell+1)}^\phi, \ldots, V_{\phi(2k-2\ell)}^\phi)$ are $(\frac{\beta\varepsilon}{2},d)$-regular for some $d \ge \gamma$
\[
    (R_1^\phi, \ldots, R_{k - 2\ell}^\phi, S_{k - 2\ell + 1}^\phi, \ldots, S_{k}^\phi) \qand (U_{k - 2\ell + 1}^\phi, \ldots, U_{k}^\phi, R_{k + 1}^\phi, \ldots, R_{2k - 2\ell}^\phi)
\]
are $(\varepsilon,d)$-regular, for some $d \ge \gamma$, where we used that $h(\phi)\geq \beta$ for all $\phi\in\Phi^+$.
Then, with Lemma~\ref{lemma:partition} we obtain at most $\frac{2k}{(\gamma - \varepsilon)\varepsilon}$ many $\ell$-paths that cover all but $k\varepsilon|R_i^\phi|$ vertices of $R_1^\phi, \dots, R_{2k-2\ell}^\phi$.
Applying this to each homomorphism $\phi \in \Phi^+$ we obtain at most $s$ many $\ell$-paths.

We claim that the number of vertices in $V(\cch)$ that are not covered by these $\ell$-paths is less then $\alpha n$.
For this note that the uncovered vertices are the vertices from the partition class~$V_0$, the vertices that are not contained in any $R_i^\phi$ and those vertices in some $R_i^\phi$ that are not contained in any $\ell$-path.
At most $\frac{\alpha}{2}n$ vertices are not in any $R_i^\phi$ due to the weight of the $\beta$-hom$(C_\ell)$-tiling $h$ and we lose at most $\frac{2t}{\beta}$ vertices due to the rounding in the definition of $R_i^\phi$.
The $\ell$-paths cover all but a $(k\varepsilon)$-fraction of vertices in $\bigcup_{i,\phi} R_i^\phi$.
Consequently the total number of uncovered vertices is at most
\[
    \varepsilon_0 n + \frac{\alpha}{2}n + \frac{2t}{\beta} + k\varepsilon n < \alpha n.
\]

Now suppose that $\ccr$ is $\beta$-fractionally $(\ell,4C'\gamma)$-extremal.
This means by definition that there is a function $b\colon V(\ccr) \to \{0\} \cup [\beta,1]$ with
\[
\sum_{v \in V(\ccr)} b(v) \geq \frac{2(k-\ell)-1}{2(k-\ell)}t
\qqand
    \sum_{e \in E(\ccr)} \prod_{v \in e} b(v) \leq 4C'\gamma \binom{t}{k}.
\]
For each $i\in [t]$ we fix a subset $A_i\subseteq V_i$ with $|A_i|=\lfloor b(i)|V_i| \rfloor$ and define $B=\bigcup_{i\in [t]}A_i$.
Thus, we can bound the number of edges on $B$ by those that are in $(\varepsilon_0, d)$-regular tuples for some $d \ge \gamma$, the edges that are in $k$-tuples which are not dense or not regular and those that contain two or more vertices from the same $A_i$:
\begin{align*}
  e_\cch (B) & \leq   \sum_{e \in E(\ccr)} \prod_{v \in e}\left( b(v) \frac{n}{t}\right) + \binom{t}{k} \gamma{\left(\frac{n}{t}\right)}^k + \varepsilon_0 \binom{t}{k} {\left(\frac{n}{t}\right)}^k + t \binom{n/t}{2}\binom{n}{k-2} \\
                    & \leq  4C'\gamma\binom{n}{k} + \gamma\binom{n}{k} + \varepsilon_0 \binom{n}{k} + \frac{k(k - 1)}{2t}\binom{n}{k} \\
                    & \leq   5C'\gamma\binom{n}{k}.
\end{align*}
Note that
\[
    |B|  \geq  \left( \frac{2(k - \ell)-1}{2(k - \ell)}t\right)(1 - \varepsilon_0)\frac{n}{t} - t\geq \left( \frac{2(k - \ell) -1}{2(k - \ell)} - \varepsilon_0\right)n.
\]
Therefore, by adding at most $\varepsilon_0 n$ vertices from $V\setminus B$ to $B$ we obtain a set $B'$ with $|B'|= \left\lfloor \frac{2(k - \ell) -1}{2(k - \ell)}n \right\rfloor$ such that
\[
    e_\cch(B') \leq e_{\cch}(B) + \varepsilon_0 n\binom{n}{k-1}\leq  6C'\gamma\binom{n}{k} = C\gamma\binom{n}{k},
\]
from which we conclude that $\cch$ is $(\ell,C\gamma)$-extremal.
\end{proof}

\section{Proof of Theorem~\ref{theorem:hamilton}}
\label{sec:mainproof}
Below we give the proof of the main technical result, which details the outline from Section~\ref{sec:pf-outline} and is based on the lemmas from the last section.

\begin{proof}[Proof of Theorem~\ref{theorem:hamilton}]
Let $0<\xi<1$ and let $k\geq 4$ and $1\leq \ell<k/2$ be integers. Let~$C$ and~$\gamma_0$ be given by the Path-Tiling Lemma (Lemma~\ref{lemma:tiling}) for $k$ and $\ell$.
Let $\gamma<\gamma_0$ be a sufficiently small constant, in particular we may assume $C\gamma \ll \xi$.
From the Absorbing Lemma (Lemma~\ref{lemma:absorbing}) for $\eta=\zeta=\gamma$, $k$ and $\ell$ we obtain~$\varepsilon$.
Following the quantification of the Path-Tiling Lemma for $\alpha=\varepsilon/2$ and $5\gamma$ we obtain an integer $s$.
We will use the Reservoir Lemma (Lemma~\ref{lemma:reservoir}) with $\eta=\frac{4(k-\ell)-1}{4{(k-\ell)}^2}-3\gamma$, $\varepsilon'=\min\{\varepsilon/2,\gamma\}$, $k$, and~$m=s+1$.
Let $n\in (k-\ell)\bbn$ be sufficiently large and let~$\cch$ be a $k$-uniform hypergraph on $n$ vertices.

Suppose $\cch$ is not $(\ell,\xi)$-extremal and
\begin{equation*}
    \delta_{k-2}(\cch)
    \geq
    \left(\frac{4(k-\ell)-1}{4(k-\ell)^2}-\gamma\right)\binom{n}{2}.
\end{equation*}
Let $\cca$ be the absorbing path obtained with the Absorbing Lemma and let $X_0$ and $Y_0$ be the ends of $\cca$.
Then $|V(\cca)|\leq \gamma n$ and $\cca$ has the following absorption property:
for every subset $U\subset V\setminus V(\cca)$ with $|U|\leq \varepsilon n$ and $|U|\in (k-\ell)\bbn$ there exists an $\ell$-path $\ccq\subset \cch$ such that $V(\ccq) = V(\cca) \cup U$ and $\ccq$ has the ends $X_0$ and $Y_0$.

Let $V'=(V\setminus V(\cca))\cup \{X_0,Y_0\}$ and let $\cch'=\cch[V']$ be the subhypergraph of $\cch$ induced by $V'$.
Note that
\begin{equation*}
    \delta_{k-2}(\cch')
    \geq
    \left(\frac{4(k-\ell)-1}{4(k-\ell)^2}-3\gamma\right)\binom{n}{2}.
\end{equation*}
The Reservoir Lemma guarantees the existence of a set $R\subset V'$ with $|R|\leq \varepsilon'n\leq \gamma n$ such that for every $j\leq s+1$ every family ${(X_i,Y_i)}_{i\in [j]}$  of mutually disjoint pairs of sets of~$\ell$ vertices can be connected by paths that contain vertices of $\bigcup_{i\in[j]} (X_i\cup Y_i)\cup R$ only.

Let $V''=V\setminus (V(\cca)\cup R)$ and let $\cch''=\cch[V'']$ be the subhypergraph of $\cch$ induced by~$V''$.
Then
\begin{equation*}
    \delta_{k-2}(\cch'')
    \geq
    \left(\frac{4(k-\ell)-1}{4(k-\ell)^2}-5\gamma\right)\binom{n}{2}.
\end{equation*}

Now we apply the Path-Tiling Lemma to $\cch''$ and either we obtain a family of at most~$s$ disjoint $\ell$-paths that cover all but at most $\alpha |V''|\leq \alpha n$ vertices of $\cch''$, or $\cch''$ is $(\ell,5C \gamma)$-extremal.
Set $n''=|V''|$ and suppose for a contradiction that $\cch''$ is $(\ell, 5C \gamma)$-extremal.
Then there exists a set $B\subset V''$ such that $|B|=\big\lfloor \frac{2(k-\ell)-1}{2(k-\ell)}n''\big\rfloor$ and $e(B)\leq 5C \gamma {(n'')}^k$.
By adding at most $n-n''\leq 2\gamma n$ vertices from $V\setminus B$ to $B$, we obtain a vertex set $B'\subset V$ such that $|B'|=\big\lfloor \frac{2(k-\ell)-1}{2(k-\ell)}n\big\rfloor$ and
\[
    e(B')\leq  5C \gamma {(n'')}^k + 2\gamma n\binom{n-1}{k-1} \leq \xi n^k,
\]
a contradiction to the fact that $\cch$ is not $(\ell,\xi)$-extremal.
Therefore, we may assume that there exist disjoint $\ell$-paths $\ccp_1,\ldots,\ccp_j$ with $j\leq s$ that cover all but at most $\alpha |V''|\leq \alpha n$ vertices of $\cch''$.

For all $i\in[j]$, we denote the ends of $\ccp_i$ by $X_i$ and $Y_i$.
Let $Y_{j+1}=Y_0$.
By using the Reservoir Lemma to connect the family $(X_i,Y_{i+1})_{0\leq i\leq j}$, we connect the $\ell$-paths $\cca,\ccp_1,\ldots,\ccp_j$ to an $\ell$-cycle $\ccc\subset \cch$.

Let $U=V\setminus V(\ccc)$ be the set of vertices not contained in $\ccc$, i.e.\ the vertices that were leftover in the reservoir $R$ or uncovered by the path-tiling.
We have $|U|\leq (\varepsilon' + \alpha)n\leq \varepsilon n$.
Furthermore, since $\ccc$ is an $\ell$-cycle and $n\in(k-\ell)\bbn$, we have $|U|\in (k-\ell)\bbn$.
Therefore, we can utilise the absorbing property of $\cca$ to replace $\cca$ in $\ccc$ by a path $\ccq$ with the same ends as $\cca$, obtaining a Hamiltonian $\ell$-cycle of $\cch$.
\end{proof}

\section{Acknowledgements}
We thank both referees for their detailed work and helpful comments.

\begin{bibdiv}
\begin{biblist}

\bib{BMSCH3b}{article}{
    author={Bastos, Josefran {de} Oliveira},
    author={Mota, Guilherme Oliveira},
    author={Schacht, Mathias},
    author={Schnitzer, Jakob},
    author={Schulenburg, Fabian},
    title={Loose Hamiltonian cycles forced by large $(k-2)$-degree -- sharp version},
    note={In preparation},
}

\bib{BuHaSc13}{article}{
   author={Bu{\ss}, Enno},
   author={H{\`a}n, Hi\d{\^e}p},
   author={Schacht, Mathias},
   title={Minimum vertex degree conditions for loose Hamilton cycles in
   3-uniform hypergraphs},
   journal={J. Combin. Theory Ser. B},
   volume={103},
   date={2013},
   number={6},
   pages={658--678},
   issn={0095-8956},
   review={\MR{3127586}},
   doi={10.1016/j.jctb.2013.07.004},
}

\bib{Ch91}{article}{
   author={Chung, Fan R. K.},
   title={Regularity lemmas for hypergraphs and quasi-randomness},
   journal={Random Structures Algorithms},
   volume={2},
   date={1991},
   number={2},
   pages={241--252},
   issn={1042-9832},
   review={\MR{1099803}},
   doi={10.1002/rsa.3240020208},
}

\bib{FR92}{article}{
   author={Frankl, P.},
   author={R{\"o}dl, V.},
   title={The uniformity lemma for hypergraphs},
   journal={Graphs Combin.},
   volume={8},
   date={1992},
   number={4},
   pages={309--312},
   issn={0911-0119},
   review={\MR{1204114}},
   doi={10.1007/BF02351586},
}

\bib{HaSc10}{article}{
   author={H{\`a}n, Hi\d{\^e}p},
   author={Schacht, Mathias},
   title={Dirac-type results for loose Hamilton cycles in uniform
   hypergraphs},
   journal={J. Combin. Theory Ser. B},
   volume={100},
   date={2010},
   number={3},
   pages={332--346},
   issn={0095-8956},
   review={\MR{2595675}},
   doi={10.1016/j.jctb.2009.10.002},
}

\bib{HaZh15}{article}{
   author={Han, Jie},
   author={Zhao, Yi},
   title={Minimum codegree threshold for Hamilton $\ell$-cycles in
   $k$-uniform hypergraphs},
   journal={J. Combin. Theory Ser. A},
   volume={132},
   date={2015},
   pages={194--223},
   issn={0097-3165},
   review={\MR{3311344}},
   doi={10.1016/j.jcta.2015.01.004},
}

\bib{HaZh15b}{article}{
   author={Han, Jie},
   author={Zhao, Yi},
   title={Minimum vertex degree threshold for loose Hamilton cycles in
   3-uniform hypergraphs},
   journal={J. Combin. Theory Ser. B},
   volume={114},
   date={2015},
   pages={70--96},
   issn={0095-8956},
   review={\MR{3354291}},
   doi={10.1016/j.jctb.2015.03.007},
}

\bib{KaKi99}{article}{
   author={Katona, Gyula Y.},
   author={Kierstead, H. A.},
   title={Hamiltonian chains in hypergraphs},
   journal={J. Graph Theory},
   volume={30},
   date={1999},
   number={3},
   pages={205--212},
   issn={0364-9024},
   review={\MR{1671170}},
}

\bib{KeKuMyOs11}{article}{
   author={Keevash, Peter},
   author={K{\"u}hn, Daniela},
   author={Mycroft, Richard},
   author={Osthus, Deryk},
   title={Loose Hamilton cycles in hypergraphs},
   journal={Discrete Math.},
   volume={311},
   date={2011},
   number={7},
   pages={544--559},
   issn={0012-365X},
   review={\MR{2765622}},
   doi={10.1016/j.disc.2010.11.013},
}

\bib{Ko31}{article}{
      author={{K\"onig}, Denes},
       title={Graphok \'es matrixok},
    language={Hungarian (German summary)},
        date={1931},
        ISSN={0302-7317},
     journal={{Mat. Fiz. Lapok}},
      volume={38},
       pages={116--119},
}

\bib{KuMyOs10}{article}{
   author={K{\"u}hn, Daniela},
   author={Mycroft, Richard},
   author={Osthus, Deryk},
   title={Hamilton $\ell$-cycles in uniform hypergraphs},
   journal={J. Combin. Theory Ser. A},
   volume={117},
   date={2010},
   number={7},
   pages={910--927},
   issn={0097-3165},
   review={\MR{2652102}},
   doi={10.1016/j.jcta.2010.02.010},
}

\bib{KuOs06}{article}{
   author={K{\"u}hn, Daniela},
   author={Osthus, Deryk},
   title={Loose Hamilton cycles in 3-uniform hypergraphs of high minimum
   degree},
   journal={J. Combin. Theory Ser. B},
   volume={96},
   date={2006},
   number={6},
   pages={767--821},
   issn={0095-8956},
   review={\MR{2274077}},
   doi={10.1016/j.jctb.2006.02.004},
}

\bib{RRsurv}{article}{
   author={R{\"o}dl, Vojtech},
   author={Ruci{\'n}ski, Andrzej},
   title={Dirac-type questions for hypergraphs---a survey (or more problems
   for Endre to solve)},
   conference={
      title={An irregular mind},
   },
   book={
      series={Bolyai Soc. Math. Stud.},
      volume={21},
      publisher={J\'anos Bolyai Math. Soc., Budapest},
   },
   date={2010},
   pages={561--590},
   review={\MR{2815614}},
   doi={10.1007/978-3-642-14444-8\_16},
}

\bib{RoRuSz06}{article}{
   author={R{\"o}dl, Vojt{\v{e}}ch},
   author={Ruci{\'n}ski, Andrzej},
   author={Szemer{\'e}di, Endre},
   title={A Dirac-type theorem for 3-uniform hypergraphs},
   journal={Combin. Probab. Comput.},
   volume={15},
   date={2006},
   number={1-2},
   pages={229--251},
   issn={0963-5483},
   review={\MR{2195584}},
   doi={10.1017/S0963548305007042},
}

\bib{RoRuSz08}{article}{
   author={R{\"o}dl, Vojt{\v{e}}ch},
   author={Ruci{\'n}ski, Andrzej},
   author={Szemer{\'e}di, Endre},
   title={An approximate Dirac-type theorem for $k$-uniform hypergraphs},
   journal={Combinatorica},
   volume={28},
   date={2008},
   number={2},
   pages={229--260},
   issn={0209-9683},
   review={\MR{2399020}},
   doi={10.1007/s00493-008-2295-z},
}

\bib{St90}{thesis}{
   author={Steger, Angelika},
   title={Die Kleitman-Rothschild Methode},
   type={PhD thesis},
   organization={Forschungsinstitut f\"ur Diskrete Mathematik, Rheinische Friedrichs-Wilhelms-Universit\"at Bonn},
   date={1990},
}

\bib{Sz75}{article}{
   author={Szemer{\'e}di, Endre},
   title={Regular partitions of graphs},
   language={English, with French summary},
   conference={
      title={Probl\`emes combinatoires et th\'eorie des graphes},
      address={Colloq. Internat. CNRS, Univ. Orsay, Orsay},
      date={1976},
   },
   book={
      series={Colloq. Internat. CNRS},
      volume={260},
      publisher={CNRS, Paris},
   },
   date={1978},
   pages={399--401},
   review={\MR{540024}},
}

\bib{MR3526407}{article}{
   author={Zhao, Yi},
   title={Recent advances on Dirac-type problems for hypergraphs},
   conference={
      title={Recent trends in combinatorics},
   },
   book={
      series={IMA Vol. Math. Appl.},
      volume={159},
      publisher={Springer},
   },
   date={2016},
   pages={145--165},
   review={\MR{3526407}},
   doi={10.1007/978-3-319-24298-9\_6},
}

\end{biblist}
\end{bibdiv}

\end{document}